\definecolor{labelkey}{rgb}{0,0.08,0.45}
\definecolor{refkey}{rgb}{0,0.6,0.0}
\definecolor{Brown}{rgb}{0.45,0.0,0.05}
\definecolor{lime}{rgb}{0.00,0.8,0.0}
\definecolor{lblue}{rgb}{0.8,0.85,1.00}
\definecolor{myblue}{rgb}{.8, .8, 1}
  \newcommand*\mybluebox[1]{%
    \colorbox{myblue}{\hspace{1em}#1\hspace{1em}}}
\newcommand{\nnn}{\ensuremath{{n\in{\mathbb N}}}}
\newcommand{\thalb}{\ensuremath{\tfrac{1}{2}}}
\newcommand{\menge}[2]{\big\{{#1}~\big |~{#2}\big\}}
\newcommand{\To}{\ensuremath{\rightrightarrows}}
\newcommand{\fenv}[1]%
{\ensuremath{\,\overrightarrow{\operatorname{env}}_{#1}}}
\newcommand{\benv}[1]%
{\ensuremath{\,\overleftarrow{\operatorname{env}}_{#1}}}
\newcommand{\scal}[2]{\left\langle{#1},{#2}  \right\rangle}
\newcommand{\RR}{\ensuremath{\mathbb R}}
\newcommand{\RP}{\ensuremath{\mathbb{R}_+}}
\newcommand{\RM}{\ensuremath{\mathbb{R}_-}}
\newcommand{\dom}{\ensuremath{\operatorname{dom}}}
\newcommand{\gr}{\ensuremath{\operatorname{gra}}}
\newcommand{\ran}{\ensuremath{\operatorname{ran}}}
\newcommand{\Fix}{\ensuremath{\operatorname{Fix}}}
\newcommand{\Id}{\ensuremath{\operatorname{Id}}}
\newtheorem{theorem}{Theorem}[section]
\newtheorem{lemma}[theorem]{Lemma}
\newtheorem{corollary}[theorem]{Corollary}
\newtheorem{proposition}[theorem]{Proposition}
\newtheorem{definition}[theorem]{Definition}
\theoremstyle{plain}{\theorembodyfont{\rmfamily}
}
\theoremstyle{plain}{\theorembodyfont{\rmfamily}
}
\theoremstyle{plain}{\theorembodyfont{\rmfamily}
}
\theoremstyle{plain}{\theorembodyfont{\rmfamily}
\newtheorem{example}[theorem]{Example}}
\newtheorem{fact}[theorem]{Fact}
\theoremstyle{plain}{\theorembodyfont{\rmfamily}
\newtheorem{remark}[theorem]{Remark}}
\renewcommand{\iff}{\Leftrightarrow}
\providecommand{\RR}{\mathbb{R}}
\providecommand{\ran}{\operatorname{ran~}}
\providecommand{\dom}{\operatorname{dom}~}
\newcommand{\fix}{\ensuremath{\operatorname{Fix}}}
\providecommand{\gr}{\operatorname{gr}}
\providecommand{\Id}{\operatorname{{ Id}}}
\providecommand{\bK}{{\bf K}}
\providecommand{\bZ}{{\bf Z}}
\providecommand{\To}{\rightrightarrows}
\providecommand{\gr}{\operatorname{gra}}
\providecommand{\fix}{\operatorname{Fix}}
\providecommand{\ran}{\operatorname{ran}}
\providecommand{\Id}{\operatorname{Id}}
\providecommand{\inns}[2][w]{ [#2; #1]}
\newcommand{\outs}[2][w]{[#1; #2]}
\providecommand{\J}{{ J}}
\providecommand{\R}{{ R}}
\providecommand{\T}{{ T}}
\providecommand{\Aw}{\inns[w]{A}}
\providecommand{\Bw}{\ensuremath{{\outs[w]{B}}}}
\providecommand{\wA}{\outs[w]{A}}
\providecommand{\Tw}{\inns[-w]{T}}
\providecommand{\wT}{\outs[-w]{T}}
\providecommand{\cran}{\overline{\ran}}
\providecommand{\RR}{\mathbb{R}}
\begin{document}
\tikzstyle{decision} = [diamond, draw, fill=blue!50]
\tikzstyle{line} = [draw, -stealth, thick]
\tikzstyle{elli}=[draw, ellipse, fill=red!50,minimum height=8mm, text width=5em, text centered]
\tikzstyle{block} = [draw, rectangle, fill=blue!50, text width=8em, text centered, minimum height=15mm, node distance=10em]

\title{\textsc{Generalized solutions for the sum of two maximally monotone
operators}}

\author{
Heinz H.\ Bauschke\thanks{
Mathematics, University
of British Columbia,
Kelowna, B.C.\ V1V~1V7, Canada. E-mail:
\texttt{heinz.bauschke@ubc.ca}.},
Warren L.\ Hare\thanks{
Mathematics,
University of British Columbia,
Kelowna, B.C.\ V1V~1V7, Canada.
E-mail:  \texttt{warren.hare@ubc.ca}.},
~and Walaa M.\ Moursi\thanks{
Mathematics, University of
British Columbia,
Kelowna, B.C.\ V1V~1V7, Canada. E-mail:
\texttt{walaa.moursi@ubc.ca}.}}

\date{June 7, 2013}
\maketitle

\vskip 8mm

\begin{abstract} \noindent
A common theme in mathematics is to define generalized solutions
to deal with problems that potentially do not have solutions. 
A classical
example is the introduction of least squares solutions via the
normal equations associated with a possibly infeasible system of
linear equations. 

In this paper, we introduce a ``normal
problem'' associated with finding a zero
of the sum of two maximally monotone operators. 
If the original problem admits solutions, then the normal problem
returns this same set of solutions. 
The normal problem may yield solutions when the original problem does not
admit any; furthermore, it has attractive variational and duality
properties. Several examples illustrate our theory. 
\end{abstract}

{\small
\noindent
{\bfseries 2010 Mathematics Subject Classification:}
{Primary 47H05, 47H09, 90C46;
Secondary 49M27, 49N15, 90C25. 
}

\noindent {\bfseries Keywords:}
Attouch--Th\'era duality,
Douglas--Rachford splitting operator, 
firmly nonexpansive mapping, 
generalized solution, 
maximally monotone operator,
resolvent. 
}

\section{Motivation and Introduction}

\subsection{A motivation from Linear Algebra}

\label{s:motLA}

A classical problem rooted in Linear Algebra and of central importance in the natural
sciences is to solve a system of linear equations, say 
\begin{equation}
\label{e:0602a}
Ax=b.
\end{equation}
However, it may occur (due to noisy data, for instance) that
\eqref{e:0602a} does not have a solution. 
An ingenious approach to cope with this situation, dating back to Carl Friedrich
Gauss and his famous prediction of the asteroid Ceres
(see, e.g., \cite[Subsection~1.1.1]{Bjorck} and \cite[Epilogue in
 Section~4.6]{Meyer}) in 1801, is  to consider the 
\emph{normal equation} associated with \eqref{e:0602a}, namely
\begin{equation}
\label{e:0602b}
A^*Ax = A^*b,
\end{equation}
where $A^*$ denotes the transpose of $A$. 
The normal equation \eqref{e:0602b} has extremely useful properties:
\begin{itemize}
\item
If the original system \eqref{e:0602a} has a solution,
then so does the associated system \eqref{e:0602b}; furthermore,
the sets of solutions of these two systems coincide in this case.
\item 
The associated system \eqref{e:0602b} always has a solution.
\item 
The solutions of the normal equations have a \emph{variational
interpretation} as \emph{least squares solutions}: they are the minimizers 
of the function $x\mapsto \|Ax-b\|^2$.
\end{itemize}
Our goal in this paper is to introduce a ``normal problem'' associated with
the problem of finding a zero of the sum of two monotone operators.
The solutions of this normal problem will agree with the solutions of the
original problem provided the latter set is nonempty. 
The normal problem will also have a variational interpretation as well as attractive
duality properties. We start developing the framework required to explain this
in the following subsection.

\subsection{The sum problem and Attouch--Th\'era duality}

Throughout this paper,
\begin{empheq}[box=\mybluebox]{equation}
\text{$X$ is
a real Hilbert space with inner
product $\scal{\cdot}{\cdot}$ }
\end{empheq}
and induced norm $\|\cdot\|$.
Recall that a set-valued operator $A\colon X\To X$ 
(i.e., $(\forall x\in X)$ $Ax\subseteq X$)
is \emph{monotone} if
$(\forall (x,x^*)\in\gr A)(\forall (y,y^*)\in\gr A)$
$\scal{x-y}{x^*-y^*} \geq 0$; $A$ is \emph{maximally monotone} if $A$ is monotone
and it is impossible to extend $A$ while keeping monotonicity.
Since subdifferential operators of 
proper lower semicontinuous convex functions
are maximally monotone, as are continuous linear operators with a
monotone symmetric part, it is not surprising that 
maximally monotone operators play an important role 
in modern optimization and variational analysis.
For relevant books on monotone operator theory and convex analysis
we refer the reader to, e.g., 
\cite{BC2011},
\cite{BorVanBook}, 
\cite{Brezis},
\cite{BurIus},
\cite{Rock70},
\cite{Rock98},
\cite{Simons1},
\cite{Simons2},
\cite{Zalinescu},
\cite{Zeidler2a},
\cite{Zeidler2b},
and \cite{Zeidler1}.
From now on, we assume that 
\begin{empheq}[box=\mybluebox]{equation}
\text{$A$ and $B$ are maximally monotone operators on $X$.}
\end{empheq}
Because it encompasses the problem of finding solutions to constrained
convex optimization problems, 
a key problem in monotone operator theory is to find
a zero of the sum $A+B$. Let us formalize this now.

\begin{definition}[primal problem]
The \emph{primal problem} associated with the (ordered) pair $(A,B)$ is to
determine the set of zeros of the sum,
\begin{equation}
Z_{(A,B)} := (A+B)^{-1}(0),
\end{equation}
also referred to as the set of \emph{primal solutions}. 
When there is no cause for confusion, we will write $Z$ instead of
$Z_{(A,B)}$. 
\end{definition}
Since addition is commutative,
it is clear that 
the order of the operators $A$ and $B$ is irrelevant and thus 
$Z_{(A,B)} = Z_{(B,A)}$.
In contrast, the order for the dual problem matters. Before we 
formally define the dual problem, we must introduce some notation.
First, 
\begin{equation}
A^\ovee := (-\Id)\circ A \circ(-\Id).  
\end{equation}		 
Note that $A^\ovee$ is also maximally monotone as 
is $(A^{-1})^\ovee = (A^{\ovee})^{-1}$, which motivates the
definition\footnote{This is similar to the notation $A^{-T}$ for
the transpose of the inverse of an invertible matrix in Linear Algebra.}
\begin{equation}\label{e:commute}
A^{-\ovee} := \big(A^{-1}\big)^\ovee = \big(A^{\ovee}\big)^{-1}.
\end{equation}

\begin{definition}[dual pair and (Attouch--Th\'era) dual problem]
\label{d:ATdual}
The \emph{dual pair} of $(A,B)$ is $(A,B)^* := (A^{-\ovee},B^{-1})$. 
The \emph{(Attouch--Th\'era) dual problem} associated with the 
pair $(A,B)$ is to 
determine the set of zeros of the sum,
\begin{equation}
K_{(A,B)} := \big(A^{-\ovee}+B^{-1}\big)^{-1}(0),
\end{equation}
also referred to as the set of \emph{dual solutions}. 
When there is no cause for confusion, we will write $K$ instead of
$K_{(A,B)}$. 
\end{definition}

This duality, pioneered by Attouch and Th\'era \cite{AT},
has very attractive properties, including the following:
\begin{itemize}
\item $(A,B)^{**} = (A,B)$.
\item The dual problem of $(A,B)$ is precisely the primal problem of
$(A,B)^*$.
\item The set of primal solutions is nonempty if and only if the set of dual
solutions is nonempty. 
\end{itemize}

\subsection{Aim of this paper}

Not every sum problem admits a solution: suppose that 
$A= N_U$ and $B=N_V$, where $U$ and $V$ are nonempty closed convex
subsets of $X$. It is clear that $Z$, the set of primal solutions
associated with $(A,B)$, is equal to $U\cap V$ --- 
\emph{however, this intersection may be empty} in which case the primal
problem does not have any solution. 

\emph{Our aim in this paper is to define a \emph{normal problem} associated
with the original sum problem with attractive and useful properties.}
Similarly to the complete extension of
classical linear equations via normal equations (see
Section~\ref{s:motLA}), our proposed approach achieves the following:
\begin{itemize}
\item
If the original problem has a solution, then so does the normal
problem and
the sets of solutions to these problems coincide. 
\item
The normal problem may have a solution even if the original
problem does not have any.
\item 
The solutions of the normal problem have a variational interpretation as 
\emph{infimal displacement} solutions related to the Douglas--Rachford splitting
operator. 
\item 
The normal problem interacts well with Attouch--Th\'era duality.
\end{itemize}

Due to some technical results that need to be reviewed and developed, 
we postpone the actual derivation and definition of the normal problem
until Section~\ref{s:normprob}. 
We conclude this introductory section with
some comments on the organization and notation of this paper. 

\subsection{Organization of the paper}

The remainder of the paper is organized as follows.
In Section~\ref{s:aux}, 
we review Attouch--Th\'era duality (Section~\ref{s:AT}), 
firmly nonexpansive operators and resolvents
(Section~\ref{s:fnos}), 
the Douglas--Rachford splitting operator (Section~\ref{s:D-R}),
and we also provide some auxiliary results on perturbations
(Section~\ref{s:pert}). 
Our main results are in Section~\ref{s:main}. 
The normal problem is introduced in 
Section~\ref{s:normprob}, after presenting
results on perturbation duality (Section~\ref{s:pd}).
Examples and directions for future research are
discussed in Section~\ref{s:ex} and \ref{s:fut}, respectively.

Throughout, we utilize standard notation from convex analysis and
monotone operator theory (see, e.g., \cite{BC2011},
\cite{Rock70}, \cite{Rock98}, or \cite{Zalinescu}). 

\section{Auxiliary results}
\label{s:aux}

\subsection{Solution mappings for Attouch--Th\'era duality}
\label{s:AT}

\begin{definition}[solution mappings]
The \emph{dual and primal solution mappings} associated with $(A,B)$ are 
\begin{equation}
\bK \colon X\To X\colon
x\mapsto (-Ax)\cap(Bx)
\end{equation}
and
\begin{equation}
\bZ \colon X\To X\colon
x\mapsto (-A^{-\ovee}x)\cap (B^{-1}x),
\end{equation}
respectively.
\end{definition}
Note that the primal solution  mapping $\bZ$ of $(A,B)$ is the dual solution
mapping of $(A,B)^*$ and analogously for $\bK$. 
The importance of these mappings stems from the following result, which
shows that the solutions mappings relate the sets of solutions $Z$ and $K$
to each other:

\begin{fact}
{\rm (See \cite{AT} or \cite[Proposition~3.1]{BHM}.)}
$\dom \bK = Z$, $\ran\bK=K$,
$\dom \bZ = K$, $\ran\bZ=Z$, and $\bZ = \bK^{-1}$. 
\end{fact}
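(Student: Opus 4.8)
The plan is to reduce all five assertions to a single sign-bookkeeping identity and then harvest them. The one computation I would do up front is to unwind the definition $A^{-\ovee}=(A^{-1})^\ovee=(-\Id)\circ A^{-1}\circ(-\Id)$, which yields, for every $u\in X$,
\begin{equation*}
A^{-\ovee}(u)=-A^{-1}(-u),
\qquad\text{equivalently}\qquad
-x\in A^{-\ovee}(u)\iff u\in -Ax.
\end{equation*}
The verification is pure definition-chasing: $-x\in A^{-\ovee}(u)\iff x\in A^{-1}(-u)\iff -u\in Ax\iff u\in -Ax$. With this displayed identity in hand, each claim becomes a membership equivalence and the only real content is choosing the right order.

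The first and most fundamental claim to settle is $\bZ=\bK^{-1}$, since the remaining two range/domain identities then cost nothing. Fixing $u$ and $x$, I would chain equivalences
\begin{equation*}
x\in\bZ(u)\iff \big(x\in -A^{-\ovee}(u)\text{ and }x\in B^{-1}(u)\big)
\iff\big(u\in -Ax\text{ and }u\in Bx\big)\iff u\in\bK(x),
\end{equation*}
where the middle step uses the displayed identity for the $A$-part and the trivial equivalence $x\in B^{-1}(u)\iff u\in Bx$ for the $B$-part. Since $u\in\bK(x)\iff x\in\bK^{-1}(u)$ by definition of the inverse relation, this shows $\gr\bZ=\gr\bK^{-1}$, that is, $\bZ=\bK^{-1}$.

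Next I would establish $\dom\bK=Z$ and $\ran\bK=K$ in the same style. For the domain, $x\in\dom\bK$ means $(-Ax)\cap(Bx)\neq\emp$, i.e.\ there is $u$ with $-u\in Ax$ and $u\in Bx$; adding these says exactly $0\in Ax+Bx=(A+B)x$, which is $x\in Z$, and every step reverses, giving the converse. For the range I repeat the computation one level up: fixing $u\in\ran\bK$, so $u\in(-Ax)\cap(Bx)$ for some $x$, the displayed identity turns $u\in -Ax$ into $-x\in A^{-\ovee}(u)$ while $u\in Bx$ reads $x\in B^{-1}(u)$, and adding yields $0\in A^{-\ovee}(u)+B^{-1}(u)$, i.e.\ $u\in K$; again the argument is reversible.

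Finally, the two remaining identities require no further work: combining $\bZ=\bK^{-1}$ with the elementary facts $\dom(\bK^{-1})=\ran\bK$ and $\ran(\bK^{-1})=\dom\bK$ gives $\dom\bZ=\ran\bK=K$ and $\ran\bZ=\dom\bK=Z$ at once. I do not expect a genuine obstacle, as the whole proof is definitional. The only place demanding care is the sign bookkeeping behind the displayed identity --- keeping the composition $(-\Id)\circ A^{-1}\circ(-\Id)$ and the passage between $u\in -Ax$ and $-x\in A^{-\ovee}(u)$ straight --- since a single misplaced sign would simultaneously corrupt $\ran\bK=K$, $\bZ=\bK^{-1}$, and the derived statements.
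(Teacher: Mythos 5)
Your proof is correct: the sign identity $-x\in A^{-\ovee}(u)\iff u\in -Ax$, the resulting graph equivalence $x\in\bZ(u)\iff u\in\bK(x)$ giving $\bZ=\bK^{-1}$, and the domain/range computations (writing $0\in Ax+Bx$ as $-u\in Ax$, $u\in Bx$, and similarly for $K$) all check out. Note that the paper itself states this as a Fact imported from \cite{AT} and \cite[Proposition~3.1]{BHM} without reproducing a proof, so there is no internal argument to compare against; your self-contained definition-chasing verification is exactly the standard one.
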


\subsection{Firmly nonexpansive operators and resolvents}

\label{s:fnos}

Most of the material in this section is standard.
Facts without explicit references may be found in, e.g., 
\cite{BC2011}, \cite{GK}, or \cite{GR}.

\begin{definition}
Let $T\colon X\to X$. 
Then $T$ is \emph{nonexpansive}, if
\begin{equation}
(\forall x\in X)(\forall y\in X)\quad
\|Tx-Ty\|\leq \|x-y\|.
\end{equation}
Furthermore, $T$ is \emph{firmly nonexpansive}
if 
\begin{equation}
(\forall x\in X)(\forall y\in X)\quad
\|Tx-Ty\|^2 + \|(\Id-T)x-(\Id-T)y\|^2\leq \|x-y\|^2.
\end{equation}
\end{definition}

Clearly, every firmly nonexpansive mapping is nonexpansive.

\begin{fact}
Let $T\colon X\to X$.
Then $T$ is firmly nonexpansive if and only if $2T-\Id$ is nonexpansive.
\end{fact}

\begin{fact}[infimal displacement vector]
{\rm (See, e.g., \cite{BBR}, \cite{BrRe}, and \cite{Pazy}.)}
\label{f:idv}
Let $T\colon X\to X$ be nonexpansive.
Then $\cran(\Id-T)$ is convex;
consequently, the \emph{infimal displacement vector} 
$v := P_{\cran(\Id-T)}0$ is the unique element in 
$\cran(\Id-T)$ such that $(\forall x\in X)$ $\|v\|\leq \|x-Tx\|$.
\end{fact}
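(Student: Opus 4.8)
The plan is to split the statement into its two genuinely distinct assertions: the convexity of $C := \cran(\Id-T)$, and the variational characterization of $v$. Once $C$ is known to be a nonempty closed convex set, the second assertion is a routine consequence of Hilbert-space projection theory, so I would dispose of the easy topological points first (the set $\cran(\Id-T)$ is closed by definition and nonempty since $X\neq\varnothing$) and concentrate the effort on convexity, from which the minimal-norm description of $v$ will then be extracted.

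For the convexity I would pass to the averaged operator $N := \tfrac12(\Id + T)$. By the preceding Fact, $N$ is firmly nonexpansive, because $2N-\Id = T$ is nonexpansive. The purpose of this reduction is that a firmly nonexpansive operator with full domain is precisely the resolvent $J_M = (\Id+M)^{-1}$ of a maximally monotone operator $M$ (here $M = N^{-1}-\Id$), and the inverse-resolvent identity gives $\Id - N = J_{M^{-1}}$. Consequently
\[
\ran(\Id - N) = \ran J_{M^{-1}} = \dom(M^{-1}) = \ran M .
\]
Since $\Id - T = 2(\Id - N)$ and closure commutes with scaling by a nonzero scalar, $\cran(\Id-T) = 2\,\overline{\ran M}$, so convexity of $C$ reduces to convexity of $\overline{\ran M}$, equivalently of the closure of the domain of the maximally monotone operator $M^{-1}$. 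This last point is exactly where I expect the real work to sit: everything preceding it is formal, whereas the convexity of the range closure of a maximally monotone operator is a genuine theorem of monotone operator theory (Rockafellar/Minty). If one prefers not to route through resolvents, one can instead cite the direct statements of Baillon--Bruck--Reich or Pazy asserting convexity of $\cran(\Id-T)$ for nonexpansive $T$.

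With convexity in hand I would conclude as follows. As $C$ is nonempty, closed and convex, the projection $v := P_C 0$ exists and is the unique element of $C$ of minimal norm. For every $x\in X$ the displacement $x-Tx$ lies in $\ran(\Id-T)\subseteq C$, so $\|v\| = \min_{c\in C}\|c\| \leq \|x-Tx\|$, which is the claimed inequality. For uniqueness, suppose $w\in C$ also satisfies $\|w\|\leq\|x-Tx\|$ for all $x$; then $\|w\| \leq \inf\{\|c\| : c\in\ran(\Id-T)\} = \inf\{\|c\| : c\in C\} = \|v\|$, where the first equality uses continuity of the norm to pass to the closure. Combined with $\|v\|\leq\|w\|$ (since $v$ minimizes the norm over $C$ and $w\in C$), this forces $\|w\| = \|v\|$, and the uniqueness of the minimal-norm element of a closed convex set then yields $w=v$, completing the proof.
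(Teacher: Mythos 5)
Your proof is correct, but note that the paper contains no proof to compare against: the statement is labelled as a Fact and delegated entirely to the cited literature (Baillon--Bruck--Reich, Bruck--Reich, Pazy), so your write-up is a reconstruction of the classical argument rather than a parallel to anything in the text. Within your argument, the projection half is handled correctly and completely (including the uniqueness step, where you rightly use that the infimum of the norm over $\ran(\Id-T)$ equals the infimum over its closure). For the convexity half, your reduction is valid but takes a detour: you pass to $N=\tfrac{1}{2}(\Id+T)$, invoke the Minty correspondence to write $N=J_M$ with $M=N^{-1}-\Id$ maximally monotone, and then use $J_M+J_{M^{-1}}=\Id$ to identify $\ran(\Id-N)=\ran M$. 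The same endpoint is reached faster by observing that $\Id-T$ is \emph{itself} maximally monotone: it is monotone because $\scal{(\Id-T)x-(\Id-T)y}{x-y}\geq\|x-y\|^2-\|Tx-Ty\|\,\|x-y\|\geq 0$ by Cauchy--Schwarz and nonexpansiveness, and it is continuous with full domain, hence maximally monotone; the convexity of $\cran(\Id-T)$ then follows in one step from the Rockafellar--Minty theorem that the closure of the range of a maximally monotone operator is convex. Either way, that theorem is the genuine content here --- you correctly isolate it instead of pretending it is elementary --- and both your detour and the shortcut consume exactly that one deep ingredient, so what your version buys is only the (pleasant but optional) connection to resolvents; in a final write-up you should cite the two standard results you rely on precisely, e.g., from \cite{BC2011}.
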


\begin{lemma}
\label{l:idvs}
Let $T_1\colon X\to X$ and $T_2\colon X\to X$ be nonexpansive.
Set $v_1 := P_{\cran(\Id-T_1T_2)}0$ and
$v_2 := P_{\cran(\Id-T_2T_1)}0$.
Then $\|v_1\|=\|v_2\|$.
\end{lemma}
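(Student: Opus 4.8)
The plan is to identify $\|v_1\|$ and $\|v_2\|$ with infimal displacement magnitudes and then to exploit the elementary fact that $T_2$ \emph{intertwines} the two compositions. First I would observe that $T_1T_2$ and $T_2T_1$ are nonexpansive, being composites of nonexpansive operators, so Fact~\ref{f:idv} applies to each; this guarantees that $v_1$ and $v_2$ are well defined and, moreover, supplies the characterizations
\[ \|v_1\| = \inf_{x\in X}\|x-T_1T_2x\|, \qquad \|v_2\| = \inf_{x\in X}\|x-T_2T_1x\|. \]
Indeed, Fact~\ref{f:idv} gives directly $\|v_i\|\le\|x-T_iT_jx\|$ for every $x$, hence $\|v_i\|$ bounds the corresponding infimum from below; the reverse inequality holds because $v_i\in\cran(\Id-T_iT_j)$ is a limit of displacement vectors $x_k-T_iT_jx_k$.

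The heart of the argument is the trivial identity $T_2\circ(T_1T_2)=(T_2T_1)\circ T_2$. Fixing an arbitrary $x\in X$ and setting $y:=T_2x$, this identity yields
\[ y-T_2T_1y = T_2x-T_2(T_1T_2x). \]
Because $T_2$ is nonexpansive, $\|T_2x-T_2(T_1T_2x)\|\le\|x-T_1T_2x\|$, and combining this with the displacement characterization of $\|v_2\|$ gives $\|v_2\|\le\|y-T_2T_1y\|=\|T_2x-T_2(T_1T_2x)\|\le\|x-T_1T_2x\|$ for every $x\in X$. Taking the infimum over $x$ then delivers $\|v_2\|\le\|v_1\|$.

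Finally I would invoke symmetry: interchanging the roles of $T_1$ and $T_2$ swaps $T_1T_2$ with $T_2T_1$ and $v_1$ with $v_2$, so the identical reasoning produces $\|v_1\|\le\|v_2\|$. The two inequalities together give $\|v_1\|=\|v_2\|$. I do not expect a serious obstacle here; the only point needing care is the passage from the projection definition $v_i:=P_{\cran(\Id-T_iT_j)}0$ to its description as the infimal displacement $\inf_x\|x-T_iT_jx\|$, which is precisely the content of Fact~\ref{f:idv}. Once that translation is in hand, the nonexpansiveness of the outer factor $T_2$ (respectively $T_1$) does all the work.
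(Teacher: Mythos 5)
Your proof is correct and follows essentially the same route as the paper: both arguments hinge on applying the outer operator $T_2$ to a displacement $x-T_1T_2x$ and using nonexpansiveness to get $\|T_2x-T_2T_1(T_2x)\|\leq\|x-T_1T_2x\|$, then concluding by symmetry. The only cosmetic difference is that you phrase the comparison via the infimum characterization $\|v_i\|=\inf_{x\in X}\|x-T_iT_jx\|$, whereas the paper takes a sequence $(x_n)$ realizing that infimum and passes to the limit.
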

\begin{proof}
By definition of $v_1$, there
exists a sequence $(x_n)_\nnn$ in $X$ such that
$\|x_n-T_1T_2x_n\|\to\|v_1\|$. 
Hence $(\forall\nnn)$ 
$\|v_2\|\leq\|(T_2x_n)-T_2T_1(T_2x_n)\|
\leq\|x_n-T_1T_2x_n\|$ and thus
$\|v_2\|\leq\|v_1\|$.
We see analogously that $\|v_1\|\leq\|v_2\|$.
\end{proof}

\begin{definition}[resolvent and reflected resolvent]
The \emph{resolvent} of $A$ is the operator 
\begin{equation}
J_A := (\Id+A)^{-1},
\end{equation}
and the \emph{reflected resolvent} is
\begin{equation}
R_A := 2J_A-\Id.
\end{equation}
\end{definition}

\begin{fact}
$J_A$ is firmly nonexpansive and $R_A$ is nonexpansive.
Furthermore,
\begin{equation}
\label{e:invresid}
J_A + J_{A^{-1}} = \Id.
\end{equation}
\end{fact}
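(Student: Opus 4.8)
The plan is to derive all three assertions from the monotonicity of $A$ together with the earlier Fact characterizing firm nonexpansiveness via $2T-\Id$. The organizing observation is the graph-level reformulation of the resolvent: for $x\in X$, writing $u=J_Ax$ is the same as saying $(u,x-u)\in\gr A$. I would use this dictionary repeatedly so that every claim reduces to elementary algebra in the inner product.

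First I would show that $J_A$ is firmly nonexpansive. Fix $x,y\in X$ and set $u=J_Ax$, $v=J_Ay$, so that $x-u\in Au$ and $y-v\in Av$. Applying monotonicity of $A$ to the pairs $(u,x-u)$ and $(v,y-v)$ gives $\scal{u-v}{(x-u)-(y-v)}\geq 0$, which rearranges to $\scal{u-v}{x-y}\geq\|u-v\|^2$. A short computation (expand $\|(\Id-J_A)x-(\Id-J_A)y\|^2$) shows that this inner-product inequality is exactly equivalent to the defining firm-nonexpansiveness inequality, so $J_A$ is firmly nonexpansive; taking $x=y$ in the same inequality forces $u=v$, recording single-valuedness. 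The nonexpansiveness of $R_A$ is then immediate: since $R_A=2J_A-\Id$, the earlier Fact (firm nonexpansiveness of $T$ is equivalent to nonexpansiveness of $2T-\Id$) applies with no further work.

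For the inverse--resolvent identity $J_A+J_{A^{-1}}=\Id$, I would argue pointwise. Fix $x\in X$ and put $u=J_Ax$, so $(u,x-u)\in\gr A$, equivalently $(x-u,u)\in\gr A^{-1}$, i.e.\ $u\in A^{-1}(x-u)$. I then claim that $w:=x-u$ satisfies the relation defining $J_{A^{-1}}x$, namely $x-w\in A^{-1}w$: indeed $x-w=u$ and $u\in A^{-1}(x-u)=A^{-1}w$. Hence $J_{A^{-1}}x=x-u=x-J_Ax$, which is precisely the asserted identity.

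The only genuinely non-routine ingredient is the well-definedness of $J_A$ as a single-valued operator with full domain $X$. Single-valuedness drops out of the monotonicity inequality above, so the substantive point is the full-domain claim, which is Minty's theorem ($\ran(\Id+A)=X$ for maximally monotone $A$); I would cite this from the background rather than reprove it. With that in hand, everything else is elementary, and I expect no serious obstacle beyond bookkeeping the equivalence between the inner-product and norm forms of firm nonexpansiveness.
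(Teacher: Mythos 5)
Your proof is correct, but there is nothing in the paper to compare it against: the paper states this as a \emph{Fact} with no proof at all, relying on the preamble of Section~\ref{s:fnos} (``Facts without explicit references may be found in, e.g., \cite{BC2011}, \cite{GK}, or \cite{GR}''). What you have written is essentially the standard textbook argument those references contain, and all three ingredients are deployed correctly: the monotonicity inequality $\scal{J_Ax-J_Ay}{x-y}\geq\|J_Ax-J_Ay\|^2$ is indeed equivalent to the norm form of firm nonexpansiveness (expanding $\|(\Id-J_A)x-(\Id-J_A)y\|^2$ reduces the defining inequality to exactly this inner-product inequality); the passage to $R_A$ via the paper's earlier Fact on $2T-\Id$ costs nothing further; and the graph-level computation $(u,x-u)\in\gr A \iff (x-u,u)\in\gr A^{-1}$ gives the identity $J_A+J_{A^{-1}}=\Id$ pointwise. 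Two points deserve to be made explicit. First, your appeal to Minty's theorem is the correct way to get $\dom J_A=X$, and it is precisely where \emph{maximal} (as opposed to mere) monotonicity enters; single-valuedness alone only needs monotonicity, as your $x=y$ observation shows. Second, in the identity argument you exhibit $x-J_Ax$ as \emph{an} element of $(\Id+A^{-1})^{-1}x$; to conclude the equation $J_{A^{-1}}x=x-J_Ax$ between points you should also note that $A^{-1}$ is itself maximally monotone, so that $J_{A^{-1}}$ is single-valued with full domain by the same reasoning applied to $A^{-1}$ --- otherwise the sum $J_A+J_{A^{-1}}$ is not a priori a well-defined single-valued operator that can equal $\Id$.
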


\begin{example}
Let $U$ be a nonempty closed convex subset of $X$,
and suppose that $A=N_U$ is the corresponding normal cone
operator. Then $J_A=P_U$ is the projection operator onto $U$
and $R_A=2P_U-\Id$ is the corresponding reflector.
\end{example}

\begin{proposition}
\label{p:0603a}
Suppose that $A\colon X\to X$ is continuous, linear, and single-valued
such that $A$ and $-A$ are monotone, and $A^2 = -\alpha\Id$, where $\alpha\in\RP$.
Then 
\begin{equation}
J_A = \frac{1}{1+\alpha}\big(\Id-A\big)
\;\;\text{and}\;\;
R_A = \frac{1-\alpha}{1+\alpha}\Id - \frac{2}{1+\alpha}A.
\end{equation}
\end{proposition}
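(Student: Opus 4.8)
The plan is to verify directly that the operator $\frac{1}{1+\alpha}(\Id-A)$ is a two-sided inverse of $\Id+A$, from which the formula for $J_A$ is immediate, and then to obtain $R_A$ by substituting into the definition $R_A = 2J_A-\Id$.

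First I would record two preliminary observations. Since $\alpha\in\RP$, we have $1+\alpha\geq 1>0$, so division by $1+\alpha$ is legitimate; and because $A$ is continuous and linear, so is $\frac{1}{1+\alpha}(\Id-A)$, which is therefore single-valued and defined on all of $X$. It is also worth noting (though not strictly needed below) that monotonicity of both $A$ and $-A$ forces $\scal{Ax}{x}=0$ for every $x\in X$, so that $A$ has vanishing symmetric part and is hence maximally monotone; this already guarantees that $J_A=(\Id+A)^{-1}$ is single-valued and everywhere defined, but the computation that follows re-derives this fact explicitly.

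The key step is a short algebraic identity. Using only the hypothesis $A^2=-\alpha\Id$ and the fact that $A$ commutes with $\Id$, I would expand $(\Id+A)(\Id-A)=\Id-A^2=(1+\alpha)\Id$ and, symmetrically, $(\Id-A)(\Id+A)=(1+\alpha)\Id$. Dividing through by $1+\alpha$ shows that $\frac{1}{1+\alpha}(\Id-A)$ is simultaneously a left and a right inverse of $\Id+A$. Consequently $\Id+A$ is a bijection of $X$, and $J_A=(\Id+A)^{-1}=\frac{1}{1+\alpha}(\Id-A)$, which is the first asserted formula.

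Finally, the formula for $R_A$ follows by substitution: $R_A=2J_A-\Id=\frac{2}{1+\alpha}(\Id-A)-\Id=\frac{1-\alpha}{1+\alpha}\Id-\frac{2}{1+\alpha}A$. I expect no genuine obstacle in this argument, as it reduces to a one-line computation; the only point requiring a little care is the interpretation of the set-valued inverse $(\Id+A)^{-1}$, where one must confirm that the explicit candidate $\frac{1}{1+\alpha}(\Id-A)$ is a bona fide everywhere-defined bounded inverse — precisely what the two one-sided identities establish.
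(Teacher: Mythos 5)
Your proof is correct and follows essentially the same route as the paper: both arguments rest on the single algebraic identity $(\Id-A)(\Id+A)=\Id-A^2=(1+\alpha)\Id$, followed by the substitution $R_A=2J_A-\Id$. The only cosmetic difference is that the paper reaches the formula by computing the resolvent product $J_AJ_{-A}=\big((\Id-A)(\Id+A)\big)^{-1}=\frac{1}{1+\alpha}\Id$ and then inverting $J_{-A}$, whereas you verify directly that $\frac{1}{1+\alpha}(\Id-A)$ is a two-sided inverse of $\Id+A$ --- a slightly more self-contained packaging of the same computation, since it never manipulates set-valued inverses.
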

\begin{proof}
We have
\begin{subequations}
\begin{align}
J_AJ_{-A}&=(\Id+A)^{-1}(\Id-A)^{-1}
=\big((\Id-A)(\Id+A)\big)^{-1}
=(\Id-A^2)^{-1}
=(\Id+\alpha\Id)^{-1}\\
&=\frac{1}{1+\alpha}\Id.
\end{align}
\end{subequations}
It follows that
$J_A = (1+\alpha)^{-1}J_{-A}^{-1}=(1+\alpha)^{-1}(\Id-A)$ and hence that
\begin{equation}
R_A = 2J_A-\Id = \frac{2}{1+\alpha}(\Id-A)-\Id
= \frac{1-\alpha}{1+\alpha}\Id - \frac{2}{1+\alpha}A,
\end{equation}
as claimed.
\end{proof}

\begin{example}
\label{ex:sushirot}
Suppose that $X=\RR^2$ and that $A\colon\RR^2\to\RR^2\colon (x,y)\mapsto
(-y,x)$ is the rotator by $\pi/2$. Then
$A^2=-\Id$; consequently, by Proposition~\ref{p:0603a},
$J_A=(1/2)(\Id-A)$ and $R_A = -A$.
\end{example}

\subsection{The Douglas--Rachford splitting operator}

\label{s:D-R}

\begin{definition}
\label{d:DR}
The \emph{Douglas--Rachford splitting operator} associated with $(A,B)$ is 
\begin{empheq}[box=\mybluebox]{equation}
T := T_{(A,B)} := J_AR_B+\Id-J_B.
\end{empheq}
We will simply use $T$ instead of $T_{A,B}$ provided there is no
cause for confusion. 
\end{definition}

\begin{fact}
\label{f:DR}
The following hold:
\begin{enumerate}
\item
\label{f:DRi}
$T_{(A,B)} = \thalb(\Id+R_AR_B)$; consequently, $T_{(A,B)}$ is firmly
nonexpansive.
\item 
\label{f:DRii}
\emph{\textbf{(Eckstein)} (See \cite[Lemma~3.6]{EckThesis}.)}
$T_{(A,B)} = T_{(A,B)^*} = T_{(A^{-\ovee},B^{-1})}$.
\item \emph{\textbf{(Eckstein)} (See \cite[Proposition~4.1]{EckThesis}.)}
\begin{equation}
\gr(T)=\menge{(b+b^*,a+b^*)}{(a,a^*)\in \gr A,(b,b^*)\in \gr B
,b-a=b^*+a^* }.
\end{equation}
\end{enumerate}
\end{fact}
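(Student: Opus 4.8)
The plan is to handle the three parts in order, using~(i) as the engine for~(ii). For~(i), I would substitute $R_A = 2J_A-\Id$ and $R_B = 2J_B-\Id$ into $\thalb(\Id+R_AR_B)$ and expand: from $R_AR_B = 2J_AR_B - R_B$ we get $\thalb(\Id+R_AR_B) = \thalb\Id + J_AR_B - \thalb R_B$, and substituting $\thalb R_B = J_B - \thalb\Id$ collapses this to $J_AR_B + \Id - J_B = T$. For firm nonexpansiveness, I would observe that $2T-\Id = R_AR_B$; since $R_A$ and $R_B$ are each nonexpansive, their composition is nonexpansive, and by the earlier characterization ($T$ firmly nonexpansive $\iff$ $2T-\Id$ nonexpansive) the claim follows.

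For~(ii), since $(A,B)^* = (A^{-\ovee},B^{-1})$ by Definition~\ref{d:ATdual}, the two asserted equalities reduce to the single identity $T_{(A,B)} = T_{(A^{-\ovee},B^{-1})}$, and by part~(i) it suffices to prove $R_AR_B = R_{A^{-\ovee}}R_{B^{-1}}$. I would first record two reflection identities. From \eqref{e:invresid} one has $J_{A^{-1}} = \Id - J_A$, hence $R_{A^{-1}} = 2J_{A^{-1}}-\Id = -R_A$; in particular $R_{B^{-1}} = -R_B$. Next, writing $N := -\Id$, a short check gives $\Id + A^\ovee = N(\Id+A)N$, so $J_{A^\ovee} = NJ_AN$ and therefore $R_{A^\ovee} = NR_AN$. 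Combining these with $A^{-\ovee} = (A^\ovee)^{-1}$ yields $R_{A^{-\ovee}} = -R_{A^\ovee} = -NR_AN$, that is, $R_{A^{-\ovee}}y = R_A(-y)$. Composing, $(R_{A^{-\ovee}}R_{B^{-1}})x = R_{A^{-\ovee}}(-R_Bx) = R_A(R_Bx)$, which is the desired identity.

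For~(iii), I would argue by double inclusion, systematically using the resolvent characterization $p = J_Bx \iff (p,x-p)\in\gr B$ (and likewise for $A$). Given $x$, set $b := J_Bx$ and $b^* := x-b$, so $(b,b^*)\in\gr B$ and $b+b^* = x$, while $R_Bx = 2b-x = b-b^*$. Then set $a := J_A(R_Bx)$ and $a^* := R_Bx - a$, so $(a,a^*)\in\gr A$ and $a+a^* = b-b^*$, which is exactly the constraint $b-a = b^*+a^*$. Finally $Tx = J_AR_Bx + x - J_Bx = a + x - b = a+b^*$, giving $(x,Tx) = (b+b^*,a+b^*)$ in the claimed form. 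Conversely, given $(a,a^*)\in\gr A$ and $(b,b^*)\in\gr B$ with $b-a = b^*+a^*$, put $x := b+b^*$; then $x-b = b^*\in Bb$ forces $J_Bx = b$ and $R_Bx = b-b^*$, the constraint rewrites as $(b-b^*)-a = a^*\in Aa$, forcing $J_A(R_Bx) = a$, and hence $Tx = a+b^*$.

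The main obstacle is the sign bookkeeping in~(ii): one must correctly propagate the effect of inversion (via \eqref{e:invresid}) and of the $\ovee$-conjugation through the reflected resolvents, so that the two stray copies of $N = -\Id$ cancel upon composition. By contrast, part~(iii) is purely a matter of tracking the three defining relations once the resolvent characterization is invoked, and part~(i) is elementary algebra.
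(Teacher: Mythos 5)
Your proof is correct in all three parts. Note, however, that the paper does not prove this statement at all: it is stated as a \emph{Fact}, with part (i) left as standard and parts (ii) and (iii) quoted from Eckstein's thesis, so there is no internal argument to compare yours against --- what you have produced is a self-contained verification of results the paper only cites. Your part (i) is the expected direct expansion. For part (ii), your route through the two conjugation identities $R_{A^{-1}}=-R_A$ (from \eqref{e:invresid}) and $R_{A^{\ovee}}=(-\Id)\circ R_A\circ(-\Id)$, which combine via \eqref{e:commute} to give $R_{A^{-\ovee}}R_{B^{-1}}=R_AR_B$ and hence the claim through the formula in (i), is clean and is essentially the mechanism underlying Eckstein's original argument; the sign bookkeeping you flagged as the main hazard is carried out correctly (the two copies of $-\Id$ do cancel under composition). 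For part (iii), your double inclusion correctly exploits that $J_A$ and $J_B$ are single-valued with full domain (Minty), so that $x-b\in Bb$ forces $b=J_Bx$ and $R_Bx-a\in Aa$ forces $a=J_AR_Bx$; the constraint $b-a=b^*+a^*$ emerges exactly as the statement $a+a^*=R_Bx$, as you show. One small remark: the paper's own Proposition~\ref{p:general} performs a closely related direct resolvent computation for the perturbed operators $T_{(\Aw,\Bw)}$, so your method for (ii) is stylistically consistent with how the paper handles its genuinely new statements.
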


\begin{corollary}
We have 
\begin{equation}
\gr(\Id-T)=\menge{(b+b^*,b-a )}{(a,a^*)\in \gr A,(b,b^*)\in \gr B
,b-a=b^*+a^* };
\end{equation}
consequently, 
\begin{subequations}
\label{con:eq}
\begin{align}
\ran(\Id -T)&=\menge{b-a}{(a,a^*)\in \gr A,~(b,b^*)\in \gr B ,~
b-a=b^*+a^*}\\
&\subseteq (\dom B-\dom A)\cap(\ran A + \ran B).
\end{align}
\end{subequations}
\end{corollary}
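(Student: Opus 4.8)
The plan is to obtain both displays as immediate consequences of the Eckstein graph formula recorded in the third part of Fact~\ref{f:DR}. The key observation is that $T$ is single-valued, so the graph of $\Id-T$ is the image of $\gr T$ under the involution $(x,y)\mapsto(x,x-y)$ of $X\times X$; explicitly, a pair $(x,u)$ lies in $\gr(\Id-T)$ precisely when $u=x-Tx$, i.e.\ when $(x,x-u)\in\gr T$.

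First I would take an arbitrary element of $\gr T$, which by Fact~\ref{f:DR} has the form $(b+b^*,a+b^*)$ with $(a,a^*)\in\gr A$, $(b,b^*)\in\gr B$, and $b-a=b^*+a^*$. Transporting it through $\Id-T$ leaves the first coordinate unchanged at $b+b^*$, while the second coordinate becomes
\[
(b+b^*)-(a+b^*)=b-a.
\]
This yields exactly the asserted description of $\gr(\Id-T)$, and reading off the second coordinates gives the displayed formula for $\ran(\Id-T)$ at once.

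It remains to justify the inclusion. I would fix a point $b-a\in\ran(\Id-T)$, witnessed by $(a,a^*)\in\gr A$ and $(b,b^*)\in\gr B$ with $b-a=b^*+a^*$. On the one hand, $a\in\dom A$ and $b\in\dom B$, whence $b-a\in\dom B-\dom A$. On the other hand, $a^*\in Aa\subseteq\ran A$ and $b^*\in Bb\subseteq\ran B$, so the side constraint $b-a=a^*+b^*$ places $b-a$ in $\ran A+\ran B$. Intersecting the two memberships delivers the claim.

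No genuine obstacle is expected. Once single-valuedness of $T$ is used to carry the Eckstein parametrization through $\Id-T$, the graph formula is pure bookkeeping, and the inclusion is a direct reading of the constraint $b-a=b^*+a^*$ together with the trivial memberships $a^*\in\ran A$ and $b^*\in\ran B$. The only point requiring a moment's care is keeping track of which coordinate plays which role, so that it is the \emph{second} coordinate $b-a$, and not the first coordinate $b+b^*$, that is shown to lie in the displayed intersection.
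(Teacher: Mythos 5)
Your proof is correct and follows exactly the route the paper intends: the corollary is stated without proof as an immediate consequence of Eckstein's graph formula in Fact~\ref{f:DR}, and your argument (pushing $\gr T$ through $(x,y)\mapsto(x,x-y)$, reading off second coordinates, and using $b\in\dom B$, $a\in\dom A$, $a^*\in\ran A$, $b^*\in\ran B$ with the constraint $b-a=a^*+b^*$) is precisely the bookkeeping the authors left implicit.
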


It is clear from the definition that and
Fact~\ref{f:DR}\ref{f:DRi} that $\Id-T_{A,B}$ is also firmly
nonexpansive. In fact, we note in passing that 
$\Id-T_{A,B}$ is itself a 
Douglas--Rachford splitting operator:

\begin{proposition}
$\Id-T_{(A,B)} = T_{(A^{-1},B)}$.
\end{proposition}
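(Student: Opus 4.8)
The plan is to prove the identity $\Id-T_{(A,B)} = T_{(A^{-1},B)}$ by showing that the two operators have the same graph, using the explicit graphical description in Fact~\ref{f:DR}\ref{f:DRii}. First I would record what each side looks like as a graph. The preceding Corollary already gives
\begin{equation*}
\gr(\Id-T_{(A,B)})=\menge{(b+b^*,b-a)}{(a,a^*)\in\gr A,~(b,b^*)\in\gr B,~b-a=b^*+a^*}.
\end{equation*}
For the right-hand side I would apply Fact~\ref{f:DR}\ref{f:DRii} directly to the pair $(A^{-1},B)$, obtaining
\begin{equation*}
\gr(T_{(A^{-1},B)})=\menge{(b+b^*,c+b^*)}{(c,c^*)\in\gr(A^{-1}),~(b,b^*)\in\gr B,~b-c=b^*+c^*}.
\end{equation*}
The task is then to match these two sets of pairs.

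The key observation is the elementary equivalence $(c,c^*)\in\gr(A^{-1})\iff(c^*,c)\in\gr A$. So I would reparametrize the right-hand side by setting $a:=c^*$ and $a^*:=c$, so that $(a,a^*)\in\gr A$, the element $c=a^*$ and $c^*=a$. Under this substitution the defining constraint $b-c=b^*+c^*$ becomes $b-a^*=b^*+a$, and the output pair $(b+b^*,c+b^*)$ becomes $(b+b^*,a^*+b^*)$. I would then verify that, after this renaming, the constraint and the second coordinate of the output agree with those appearing in $\gr(\Id-T_{(A,B)})$. The first coordinates $b+b^*$ already coincide, so everything reduces to checking the constraint and the second coordinate.

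The one point that requires a small genuine computation—and which I expect to be the only real obstacle—is reconciling the two constraints together with the two expressions for the second output coordinate. On the left we have constraint $b-a=b^*+a^*$ and second coordinate $b-a$; on the right (after renaming) we have constraint $b-a^*=b^*+a$ and second coordinate $a^*+b^*$. These are not literally the same strings, so I would argue as follows: the right-hand constraint $b-a^*=b^*+a$ can be rewritten as $b-a=b^*+a^*$, which is exactly the left-hand constraint (both say $b-a-a^*-b^*=0$, symmetric in swapping $a\leftrightarrow a^*$). Granting this common constraint, the right-hand second coordinate satisfies $a^*+b^*=a^*+(b-a^*-a)=b-a$ using $b^*=b-a^*-a$ from the constraint, so it equals the left-hand second coordinate. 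Hence the two graphs contain exactly the same pairs.

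Putting these pieces together establishes $\gr(\Id-T_{(A,B)})=\gr(T_{(A^{-1},B)})$, and since an operator on $X$ is determined by its graph, the desired identity $\Id-T_{(A,B)}=T_{(A^{-1},B)}$ follows. The argument is purely graphical and combinatorial; no nonexpansiveness or resolvent computation is needed beyond the already-established graph formula, and the symmetry of the constraint in $a$ and $a^*$ is what makes the bookkeeping close cleanly.
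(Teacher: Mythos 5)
Your proof is correct, but it takes a genuinely different route from the paper. The paper's proof is a four-line algebraic computation: it adds $T_{(A,B)}+T_{(A^{-1},B)}$ using Definition~\ref{d:DR}, invokes the resolvent identity $J_A+J_{A^{-1}}=\Id$ from \eqref{e:invresid}, and then uses $R_B=2J_B-\Id$ to collapse the sum to $\Id$. Your argument instead matches graphs: you apply Eckstein's graphical description of the Douglas--Rachford operator to the pair $(A^{-1},B)$, reparametrize $\gr(A^{-1})$ via $(a,a^*):=(c^*,c)$, observe that the constraint $b-a=b^*+a^*$ is symmetric under swapping $a\leftrightarrow a^*$, and check that under this constraint the second coordinates $a^*+b^*$ and $b-a$ coincide --- all of which is accurate. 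The trade-off: the paper's route is shorter and rests only on the elementary identity $J_A+J_{A^{-1}}=\Id$, whereas yours leans on the heavier graph formula (and the Corollary for $\gr(\Id-T)$) but makes visible the pointwise correspondence behind the identity, namely that passing from $A$ to $A^{-1}$ exactly swaps the roles of $a$ and $a^*$ in Eckstein's parametrization. One small citation slip: the graph description you use is the \emph{third} item of Fact~\ref{f:DR} (the one citing \cite[Proposition~4.1]{EckThesis}), not item \ref{f:DRii}, which is the self-duality statement $T_{(A,B)}=T_{(A,B)^*}$.
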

\begin{proof}
Using \eqref{e:invresid}, we obtain
\begin{subequations}
\begin{align}
T_{A,B}+T_{A^{-1},B}&= \Id -J_B+
J_AR_B+ \Id -J_B+J_{A^{-1}}R_B\\
&=2\Id-2J_{B}+(J_A+J_{A^{-1}})R_B\\
&=2\Id-2J_B + R_B\\
&=\Id,
\end{align}
\end{subequations}
and the conclusion follows. 
\end{proof}

\begin{fact}
\label{f:Psi}
{\rm (See \cite[Theorem~4.5]{BHM}.)}
The mapping 
\begin{equation}
\Psi\colon\gr\bK\to\Fix T\colon (z,k)\mapsto z+k
\end{equation}
is a well defined bijection that is continuous in both directions,
with $\Psi^{-1}\colon x\mapsto (J_Bx,x-J_Bx)$. 
\end{fact}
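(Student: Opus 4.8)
The plan is to verify directly that the explicit map $\Phi\colon x\mapsto(J_Bx,\,x-J_Bx)$ and the map $\Psi$ are mutually inverse bijections between $\Fix T$ and $\gr\bK$, and then to read off continuity in both directions from the (Lipschitz) continuity of $J_B$. The whole argument reduces to two inclusions together with a pair of elementary resolvent identities, so that the only genuine work is careful sign bookkeeping. I would begin by recording the characterization of membership in $\gr\bK$: since $\bK z=(-Az)\cap(Bz)$, we have $(z,k)\in\gr\bK$ exactly when $-k\in Az$ and $k\in Bz$.

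The engine of the proof is to translate each of these conditions into a resolvent equation. Because $J_B=(\Id+B)^{-1}$ is single-valued with full domain, $k\in Bz$ is equivalent to $z+k\in(\Id+B)z$, i.e.\ to $J_B(z+k)=z$; similarly $-k\in Az$ is equivalent to $J_A(z-k)=z$. To show $\Psi$ is well defined I would take $(z,k)\in\gr\bK$, set $x:=z+k$, and compute $J_Bx=z$ and $R_Bx=2J_Bx-x=z-k$, whence $J_AR_Bx=J_A(z-k)=z=J_Bx$. Substituting into $Tx=J_AR_Bx+x-J_Bx$ gives $Tx=z+x-z=x$, so $x\in\Fix T$.

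For the reverse direction I would take $x\in\Fix T$, so that $J_AR_Bx=J_Bx$, and put $z:=J_Bx$, $k:=x-J_Bx$. Then $k\in Bz$ by the resolvent equivalence above, while $R_Bx=2z-x=z-k$ yields $J_A(z-k)=J_AR_Bx=J_Bx=z$, i.e.\ $-k\in Az$; hence $k\in(-Az)\cap(Bz)$ and $(z,k)\in\gr\bK$, so $\Phi$ maps $\Fix T$ into $\gr\bK$. The two compositions are then immediate: $\Psi(\Phi x)=J_Bx+(x-J_Bx)=x$, and $\Phi(\Psi(z,k))=\Phi(z+k)=(J_B(z+k),\,(z+k)-J_B(z+k))=(z,k)$ using $J_B(z+k)=z$. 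Thus $\Psi$ is a bijection with $\Psi^{-1}=\Phi$.

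Finally, continuity in both directions is essentially free: $\Psi$ is the restriction of the continuous addition map $(z,k)\mapsto z+k$, and $\Psi^{-1}=\Phi$ is continuous because $J_B$ is firmly nonexpansive, hence nonexpansive, hence continuous, and $x\mapsto x-J_Bx$ inherits this. I expect the only place demanding care to be the sign conventions linking the condition $-k\in Az$, the reflected resolvent $R_B=2J_B-\Id$, and the identity $J_AR_Bx=J_Bx$ characterizing $\Fix T$; once those are pinned down the remainder is routine bookkeeping.
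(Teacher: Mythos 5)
Your proof is correct. Note, however, that the paper itself offers no argument for this statement: it is stated as a \emph{Fact} and attributed to \cite[Theorem~4.5]{BHM}, so there is no internal proof to compare against. What you have produced is a self-contained verification that replaces the citation, and it is sound at every step: the equivalences $k\in Bz \iff J_B(z+k)=z$ and $-k\in Az \iff J_A(z-k)=z$ are legitimate uses of the single-valuedness and full domain of the resolvents (Minty's theorem for maximally monotone operators); the forward computation $R_B(z+k)=z-k$, $J_AR_B(z+k)=z$, hence $T(z+k)=z+k$, shows $\Psi$ lands in $\Fix T$; the backward computation from $Tx=x$, i.e.\ $J_AR_Bx=J_Bx$, shows $(J_Bx,\,x-J_Bx)\in\gr\bK$; the two composition identities establish bijectivity; and continuity in both directions follows from continuity of addition and (firm) nonexpansiveness of $J_B$. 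The sign bookkeeping you flagged as the delicate point is handled correctly, in particular the identity $R_Bx=2J_Bx-x=z-k$ under the substitution $z=J_Bx$, $k=x-J_Bx$. This is the natural direct argument, and it is essentially the same resolvent-calculus route one finds in the cited source; its value here is that it makes the paper's Fact~\ref{f:Psi} self-contained rather than borrowed.
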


\begin{corollary}[Combettes]
\label{c:PLC}
{\rm (See \cite[Lemma~20.6(iii)]{Comb04}.)}
$J_B(\Fix T)=Z$.
\end{corollary}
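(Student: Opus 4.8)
The goal is to show $J_B(\Fix T)=Z$, and the plan is to exploit the two structural results that have just been established: Fact~\ref{f:Psi}, which gives a bijection $\Psi\colon\gr\bK\to\Fix T$ with explicit inverse $\Psi^{-1}\colon x\mapsto(J_Bx,x-J_Bx)$, and the fact (stated in Section~\ref{s:AT}) that $\dom\bK=Z$. My strategy is not to manipulate resolvents directly but to read off the conclusion from these two facts, since $J_B$ applied to a fixed point is exactly the first coordinate of $\Psi^{-1}$.

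First I would recall that by Fact~\ref{f:Psi} the map $\Psi$ is a bijection from $\gr\bK$ onto $\Fix T$, so $\Psi^{-1}$ is a bijection from $\Fix T$ onto $\gr\bK$. Writing $\Psi^{-1}(x)=(J_Bx,\,x-J_Bx)$, I observe that for every $x\in\Fix T$ the pair $(J_Bx,\,x-J_Bx)$ lies in $\gr\bK$; in particular its first coordinate $J_Bx$ lies in $\dom\bK$. Hence
\begin{equation}
J_B(\Fix T)\subseteq\dom\bK.
\end{equation}
Conversely, given $z\in\dom\bK$, there is some $k$ with $(z,k)\in\gr\bK$, and then $x:=\Psi(z,k)=z+k\in\Fix T$. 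Applying $\Psi^{-1}$ and using that $\Psi^{-1}\Psi=\Id$ on $\gr\bK$ gives $\Psi^{-1}(x)=(z,k)$, so in particular $J_Bx=z$. This exhibits $z$ as an element of $J_B(\Fix T)$, establishing the reverse inclusion $\dom\bK\subseteq J_B(\Fix T)$. Combining the two inclusions yields $J_B(\Fix T)=\dom\bK$.

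To finish I would invoke $\dom\bK=Z$, which was recorded in the Fact immediately following the definition of the solution mappings, so that $J_B(\Fix T)=\dom\bK=Z$, as claimed. The argument is essentially a bookkeeping exercise in reading the first coordinate of $\Psi^{-1}$, and I anticipate no genuine obstacle; the only point requiring a little care is to confirm that the first component of $\Psi^{-1}$ is precisely $J_B$, which is immediate from the stated formula $\Psi^{-1}\colon x\mapsto(J_Bx,x-J_Bx)$, and to make sure both inclusions are justified by the bijectivity of $\Psi$ rather than by a direct computation.
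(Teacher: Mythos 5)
Your proof is correct and matches the paper's intended derivation: the paper states this as an immediate corollary of Fact~\ref{f:Psi} (with a citation to Combettes in lieu of a written proof), and the argument it has in mind is precisely yours --- read off $J_B$ as the first coordinate of $\Psi^{-1}$, use bijectivity of $\Psi$ to get $J_B(\Fix T)=\dom\bK$, and conclude via $\dom\bK = Z$.
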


\subsection{Perturbation calculus}

\label{s:pert}

\begin{definition}[shift operator and corresponding inner/outer perturbations]
Let $w\in X$. We define the associated \emph{shift operator}
\begin{equation}
S_w \colon X\to X \colon x\mapsto x-w,
\end{equation}
and we extend $S_w$ to deal with subsets of $X$ by setting
$(\forall C\subseteq X)$ $S_w(C) := \bigcup_{c\in C} \{S_w(c)\}$. 
We define the corresponding \emph{inner and outer perturbations} of $A$ by
\begin{equation}
\inns[w]{A} := A\circ S_w\colon X\To X\colon x\to A(x-w),\label{w:in} 
\end{equation}
and
\begin{equation}
\outs[w]{A}:=S_w\circ A\colon X\To X\colon x\to Ax-w.\label{w:out}
\end{equation}
\end{definition}

Observe that if $w\in X$, then 
the operators $\Aw$ and $\wA$ are maximally monotone,
with domains $S_{-w}(\dom A) = w+\dom A$ and $\dom A$, respectively.

\begin{lemma}[perturbation calculus]\label{oper:1}
Let $w\in X$. 
Then the following hold:
\begin{enumerate}
\item 
\label{oper:1i}
 $\inns[w]{A}^{-1}=\outs[-w]{A^{-1}}$.
\item
\label{oper:1ii}
$\outs[w]{A}^{-1}=\inns[-w]{A^{-1}}$.
\item 
\label{oper:1iii}
$\inns[w]{A}^{\ovee}=\inns[-w]{A^{\ovee}}$.
\item 
\label{oper:1iv}
$\outs[w]{A}^{\ovee}=\outs[-w]{A^{\ovee}}$.
\item 
\label{oper:1v}
$\inns[w]{A}^{-\ovee}=\outs[w]{A^{-\ovee}}$.
\item 
\label{oper:1vi}
$\outs[w]{A}^{-\ovee}=\inns[w]{A^{-\ovee}}$. 
\end{enumerate}
\end{lemma}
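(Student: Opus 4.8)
The plan is to reduce all six identities to three elementary facts about the shift operator $S_w$ and the operation $(\cdot)^\ovee$, and then to assemble them. First I would record that $S_w^{-1}=S_{-w}$ and $S_w^\ovee=S_{-w}$; the former is immediate from $S_w\colon x\mapsto x-w$, and the latter follows since $S_w^\ovee\colon x\mapsto -S_w(-x)=x+w$. Next I would note the two composition rules that drive everything: inversion reverses the order of composition, $(M\circ N)^{-1}=N^{-1}\circ M^{-1}$, whereas $(\cdot)^\ovee$ \emph{preserves} the order, $(M\circ N)^\ovee=M^\ovee\circ N^\ovee$, the latter because $(-\Id)\circ(-\Id)=\Id$ lets one insert a cancelling pair between $M$ and $N$.

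With these in hand, parts (i)--(iv) are one-line computations from the definitions $\inns[w]{A}=A\circ S_w$ and $\outs[w]{A}=S_w\circ A$. For (i), $\inns[w]{A}^{-1}=(A\circ S_w)^{-1}=S_w^{-1}\circ A^{-1}=S_{-w}\circ A^{-1}=\outs[-w]{A^{-1}}$, and part (ii) is the mirror computation starting from $\outs[w]{A}=S_w\circ A$. For (iii), $\inns[w]{A}^\ovee=(A\circ S_w)^\ovee=A^\ovee\circ S_w^\ovee=A^\ovee\circ S_{-w}=\inns[-w]{A^\ovee}$, and (iv) is again the mirror statement for the outer perturbation. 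The only thing to watch here is the sign bookkeeping: the shift flips sign under both inversion and $(\cdot)^\ovee$, which is exactly what produces the $-w$ on the right-hand sides of (i)--(iv).

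Finally, (v) and (vi) follow by combining the previous parts through the identity $A^{-\ovee}=(A^{-1})^\ovee$ from \eqref{e:commute}. Writing $\inns[w]{A}^{-\ovee}=(\inns[w]{A}^{-1})^\ovee$, I would apply (i) to rewrite the inverse as $\outs[-w]{A^{-1}}$ and then (iv) to push the $\ovee$ inside, obtaining $\outs[w]{(A^{-1})^\ovee}=\outs[w]{A^{-\ovee}}$; part (vi) is the same argument using (ii) followed by (iii). I do not expect any genuine obstacle, since the content is entirely formal; the sole risk is a sign error in tracking how the shift transforms, so I would double-check each of (i)--(iv) by also evaluating both sides pointwise on an arbitrary $x\in X$.
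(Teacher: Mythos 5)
Your proof is correct, and its overall skeleton coincides with the paper's: parts (i)--(iv) are established first, and then (v) and (vi) are obtained by exactly the same chaining the paper uses, namely (i) followed by (iv), and (ii) followed by (iii), via $A^{-\ovee}=(A^{-1})^\ovee$. Where you differ is in how (i)--(iv) are verified. The paper checks each one directly at the level of graph membership, e.g.\ $y\in \inns[w]{A}^{-1}x \iff x\in A(y-w) \iff y-w\in A^{-1}x \iff y\in\outs[-w]{A^{-1}}x$, whereas you factor everything through three structural facts: $S_w^{-1}=S_{-w}$, $S_w^\ovee=S_{-w}$, and the composition rules $(M\circ N)^{-1}=N^{-1}\circ M^{-1}$ (order-reversing) versus $(M\circ N)^\ovee=M^\ovee\circ N^\ovee$ (order-preserving, by inserting $(-\Id)\circ(-\Id)=\Id$). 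Your route is more systematic and makes the sign bookkeeping transparent --- it explains \emph{why} the shift flips sign under both operations --- and it generalizes immediately to other perturbations. Its only cost is that the composition rules themselves must be justified for set-valued operators; in particular $(M\circ N)^{-1}=N^{-1}\circ M^{-1}$ is a statement about relations whose verification ($y\in(M\circ N)^{-1}x$ iff there exists $z$ with $z\in M^{-1}x$ and $y\in N^{-1}z$) is precisely the kind of pointwise check the paper performs directly, so the two arguments have the same underlying content, just organized at different levels of abstraction.
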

\begin{proof}
Let $(x,y)\in X^2$. 
\ref{oper:1i}:
$y\in \inns{A}^{-1}x$ $\iff$ $x\in \inns{A}y = A(y-w)$ 
$\iff$ $y-w\in A^{-1} x$ $\iff$ $y \in A^{-1}x+w=\outs[-w]{A^{-1}}x$.
\ref{oper:1ii}:
$y\in \outs{A}^{-1}x$ $\iff$ $x\in \outs{A}y$ $\iff$ $x \in A y -w$ 
 $\iff$ $x+w\in Ay$ $\iff$ $y\in A^{-1}(x+w)=\inns[-w]{A^{-1}}x$.
\ref{oper:1iii}:
$\Aw^{\ovee} = -\Aw(-x)=-A(-x-w) = A^\ovee(x+w) = \inns[-w]{A^\ovee}$. 
\ref{oper:1iv}:
$\wA^\ovee x  = -\wA(-x) = -(A(-x)-w) = A^\ovee x-(-w) = \outs[-w]{A^\ovee}x$. 
\ref{oper:1v}:
Using \ref{oper:1i} and \ref{oper:1iv}, we see that 
$\Aw^{-\ovee} = (\Aw^{-1})^\ovee = \outs[-w]{A^{-1}}^\ovee =
\outs[w]{A^{-\ovee}}$. 
\ref{oper:1vi}:
Using \ref{oper:1ii} and \ref{oper:1iii}, we see that 
$\wA^{-\ovee} = (\wA^{-1})^\ovee = \inns[-w]{A^{-1}}^\ovee =
\inns[w]{A^{-\ovee}}$. 
\end{proof} 

As an application, we record the following result which will be useful
later.

\begin{corollary}[dual of inner-outer perturbation]
Let $w\in X$. Then 
\begin{equation}
\big(\Aw,\Bw\big)^{*}=
\big(
\Aw^{-\ovee},\Bw^{-1}\big)=\big(\outs{A^{-\ovee}},\inns[-w]{B^{-1}}\big). 
\end{equation}
\end{corollary}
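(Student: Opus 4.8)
The plan is to peel the claim into two separate equalities: the first is nothing but an instance of the definition of the dual pair, and the second splits coordinatewise into two of the identities already recorded in the perturbation calculus of Lemma~\ref{oper:1}.

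First I would dispatch the equality $\big(\Aw,\Bw\big)^{*}=\big(\Aw^{-\ovee},\Bw^{-1}\big)$ by simply unwinding Definition~\ref{d:ATdual}: for any pair $(C,D)$ of maximally monotone operators one has $(C,D)^{*}=(C^{-\ovee},D^{-1})$, so substituting $C=\Aw$ and $D=\Bw$ yields the assertion verbatim. No work is required beyond recalling that $\Aw$ and $\Bw$ are themselves maximally monotone, which was observed immediately after the perturbations were defined.

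Next I would establish the second equality one coordinate at a time. For the first coordinate, I recall that $\Aw=\inns[w]{A}$ is the \emph{inner} perturbation, so Lemma~\ref{oper:1}\ref{oper:1v}, which reads $\inns[w]{A}^{-\ovee}=\outs[w]{A^{-\ovee}}$, gives $\Aw^{-\ovee}=\outs{A^{-\ovee}}$ directly (the displayed $\outs{A^{-\ovee}}$ carries the default shift $w$). For the second coordinate, I recall that $\Bw=\outs[w]{B}$ is the \emph{outer} perturbation, and I would invoke Lemma~\ref{oper:1}\ref{oper:1ii} with $A$ replaced by $B$, namely $\outs[w]{B}^{-1}=\inns[-w]{B^{-1}}$, to obtain $\Bw^{-1}=\inns[-w]{B^{-1}}$. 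Pairing the two coordinate identities finishes the argument.

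Since both equalities are immediate consequences of results already in hand, there is no genuine analytic obstacle; the only thing to watch is the bookkeeping. One must not confuse inner with outer perturbations, and must remember that inverting an outer perturbation flips the sign of the shift (the $w\mapsto-w$ in part~\ref{oper:1ii}), whereas forming $(\cdot)^{-\ovee}$ of an inner perturbation leaves the shift unchanged (part~\ref{oper:1v}). Keeping these two behaviours straight is essentially the entire content of the proof.
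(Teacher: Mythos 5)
Your proposal is correct and takes essentially the same route as the paper: the paper's entire proof reads ``Combine Definition~\ref{d:ATdual} with Lemma~\ref{oper:1}\ref{oper:1v}\&\ref{oper:1ii},'' which is exactly your decomposition into the definitional first equality plus the two coordinatewise identities. Your closing remark about which operation flips the sign of the shift is accurate bookkeeping, matching the statements of Lemma~\ref{oper:1} precisely.
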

\begin{proof}
Combine Definition~\ref{d:ATdual}
with 
Lemma~\ref{oper:1}\ref{oper:1v}\&\ref{oper:1ii}. 
\end{proof}

\subsection{Perturbations of the Douglas--Rachford operator}

We now turn to the Douglas--Rachford operator.

\begin{proposition}
\label{fix:1}
Let $w\in X$. Then the following hold:
\begin{enumerate}
\item
\label{fix:1i}
If $x\in\fix\wT$, then 
$x-w-J_Bx\in \outs{B}J_Bx\cap(-\inns{A}J_Bx)$.
\item
\label{fix:1ii}
If $y\in \outs{B}z\cap(-\inns{A}z)$, then $x=w+y+z\in\Fix\wT$ and $z=J_Bx$.
\end{enumerate}
\end{proposition}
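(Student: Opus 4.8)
The plan is to peel off every definition in sight---$T=J_AR_B+\Id-J_B$, the resolvents $J_A,J_B$, the reflected resolvent $R_B$, and the two perturbations---and reduce both parts to elementary graph inclusions for $A$ and $B$. The starting observation is that $\wT=\outs[-w]{T}$ acts by $\wT x=Tx+w$, so that
\begin{equation*}
x\in\Fix\wT\iff (\Id-T)x=w \iff J_Bx-J_AR_Bx=w,
\end{equation*}
where the last step uses $\Id-T=J_B-J_AR_B$. Throughout I would use the two resolvent characterizations $z=J_Bx\iff x-z\in Bz$ and $p=J_Ay\iff y-p\in Ap$, together with $R_Bx=2J_Bx-x$.

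For \ref{fix:1i}, I would set $z:=J_Bx$, so that $x-z\in Bz$; rewriting $x-z=(x-w-z)+w$ gives $x-w-z\in Bz-w=\outs{B}z$, the first membership. The fixed-point equation yields $J_AR_Bx=z-w$, and feeding $y=R_Bx=2z-x$ and $p=z-w$ into the $J_A$-characterization gives $(2z-x)-(z-w)=z-x+w\in A(z-w)$. Since $z-x+w=-(x-w-z)$ and $A(z-w)=\inns{A}z$, this is exactly $x-w-z\in-\inns{A}z$, the second membership.

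Part \ref{fix:1ii} reverses this computation. Given $y\in\outs{B}z\cap(-\inns{A}z)$, I would put $x:=w+y+z$. The membership $y\in\outs{B}z$ reads $y+w\in Bz$, i.e.\ $x-z\in Bz$, which forces $z=J_Bx$ by single-valuedness of $J_B$. The membership $y\in-\inns{A}z$ reads $-y\in A(z-w)$; since $R_Bx=2z-x=z-w-y$, the $J_A$-characterization converts $-y\in A(z-w)$ into $J_AR_Bx=z-w$, whence $(\Id-T)x=z-(z-w)=w$ and $x\in\Fix\wT$.

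The only real obstacle here is bookkeeping: one must keep the inner shift $\inns{A}z=A(z-w)$ and the outer shift $\outs{B}z=Bz-w$ straight, track the sign of the parameter in $\wT=\outs[-w]{T}$, and translate each resolvent equation into the correct graph inclusion. Beyond these sign conventions there is no analytic difficulty.
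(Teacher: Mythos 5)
Your proof is correct and follows essentially the same route as the paper's: both arguments reduce the fixed-point equation $x\in\Fix\wT$ to $J_Bx-J_AR_Bx=w$, obtain the $\outs{B}$-membership directly from the resolvent inclusion $x-J_Bx\in B(J_Bx)$, and translate $J_AR_Bx=J_Bx-w$ (or its converse in part \ref{fix:1ii}) into the graph inclusion for $A$ via the characterization $p=J_Ay\iff y-p\in Ap$. The only cosmetic difference is that the paper writes this last step as $R_Bx\in(\Id+A)(J_Bx-w)$ rather than invoking the characterization explicitly; the computations are identical.
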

 \begin{proof}
If $x\in X$, then $x-w-J_Bx\in\outs[w]{B}J_Bx$. 

\ref{fix:1i}: 
Since $x\in \fix(\wT)$, we have $x-\T x=w$;
equivalently, 
$J_Bx-w=J_A R_Bx$.
Hence $2J_Bx-x=R_Bx\in(A+\Id)(J_Bx-w)=\Aw J_Bx + J_Bx-w$
and thus $-(x-w-J_Bx)\in\Aw J_Bx$.

\ref{fix:1ii}: 
Since $y\in \outs{B}z\cap(-\inns{A}z) = (Bz-w)\cap(-A(z-w))$,
we have $z=J_Bx$ and $z-w=J_A(-y+z-w)$.
Hence $R_Bx = 2J_Bx-x=2z-(w+y+z)=z-w-y$ and
so $J_AR_Bx = J_A(z-w-y)=z-w$.
Thus, $x-Tx=
J_Bx-J_AR_Bx=z-(z-w)=w$. 
\end{proof}

\begin{corollary}
\label{cfix1}
Let $w\in X$. Then
$\displaystyle 
\fix {\wT}=w+ \bigcup_{z\in X} \big(z+\outs{B}z\cap(-\inns{A}z)\big)$.
\end{corollary}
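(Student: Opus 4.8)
The plan is to establish the asserted identity by proving the two inclusions separately, each being an immediate repackaging of one implication in Proposition~\ref{fix:1}. Write $E := w+ \bigcup_{z\in X} \big(z+\outs{B}z\cap(-\inns{A}z)\big)$ for the right-hand set, so that the task reduces to checking that membership in $\fix\wT$ is equivalent to being expressible in the indexed-union form defining $E$.

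For the inclusion $\fix\wT\subseteq E$, I would take an arbitrary $x\in\fix\wT$ and set $z:=J_Bx$. Proposition~\ref{fix:1}\ref{fix:1i} then gives $y:=x-w-J_Bx\in\outs{B}z\cap(-\inns{A}z)$. Since $x-w-J_Bx = x-w-z$, it follows that $x = w + (z+y)$ with $y\in\outs{B}z\cap(-\inns{A}z)$, so $x$ lies in the $z$-indexed summand of the union and hence in $E$.

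For the reverse inclusion $E\subseteq\fix\wT$, I would take an arbitrary $x\in E$, so that $x = w + z + y$ for some $z\in X$ and some $y\in\outs{B}z\cap(-\inns{A}z)$. Proposition~\ref{fix:1}\ref{fix:1ii} applies verbatim to this pair $(y,z)$ and yields $x=w+y+z\in\Fix\wT$ (and, as a bonus, $z=J_Bx$). Combining the two inclusions gives the claimed equality.

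I do not anticipate any genuine obstacle here: all the analytic content — the fixed-point characterization $x-\T x=w$ and the resolvent manipulations relating $R_Bx$, $J_Bx$, and the perturbed operators $\outs{B}$, $\inns{A}$ — has already been discharged inside Proposition~\ref{fix:1}. The only points requiring attention are bookkeeping: in the forward direction one must choose $z=J_Bx$ precisely so that the displacement $x-w-J_Bx$ coincides with the summand $y$, and one should note that the union over $z\in X$ is exactly the right device, since the forward direction produces one specific admissible $z$ while the backward direction accepts any $z$ for which the intersection $\outs{B}z\cap(-\inns{A}z)$ is nonempty.
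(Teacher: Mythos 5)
Your proposal is correct and is exactly the argument the paper intends: the corollary is stated without proof precisely because it amounts to combining the two implications of Proposition~\ref{fix:1}, taking $z=J_Bx$ with $y=x-w-J_Bx$ in the forward direction and applying part~\ref{fix:1ii} verbatim in the reverse direction, just as you do. No gaps; the bookkeeping remarks at the end are accurate.
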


\begin{proposition}\label{p:general}
Let $w\in X$. Then 
\begin{equation}
\label{e:0602c}
\T_{(\Aw,\Bw)}=\Tw
\end{equation}
and
\begin{equation}
\label{e:0602d}
\fix {\Tw}= -w+\fix\outs[-w]{T}=\bigcup_{z\in X} \Big(z+\big((Bz-w)\cap(-A(z-w))\big)\Big).
\end{equation}
\end{proposition}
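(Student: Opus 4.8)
The plan is to prove \eqref{e:0602c} by directly computing the resolvents of the perturbed operators and substituting into the Douglas--Rachford formula, and then to derive \eqref{e:0602d} from the fixed-point relationship between inner and outer perturbations together with Corollary~\ref{cfix1}.

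First I would compute $J_{\Aw}$ and $J_{\Bw}$. Since $\Aw=A\circ S_w$, the equivalence $y\in J_{\Aw}x \iff x\in y+A(y-w)$ (substitute $u:=y-w$) gives $J_{\Aw}=S_{-w}\circ J_A\circ S_w$, that is, $J_{\Aw}x = w+J_A(x-w)$. Likewise, from $\Bw=S_w\circ B$ the equivalence $y\in J_{\Bw}x \iff x\in (\Id+B)y-w$ yields $J_{\Bw}=J_B\circ S_{-w}$, i.e.\ $J_{\Bw}x=J_B(x+w)$, and therefore $R_{\Bw}x=2J_{\Bw}x-x=w+R_B(x+w)$. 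Here $\Aw$ and $\Bw$ are maximally monotone, so their resolvents are single-valued and these are genuine operator identities.

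Next I would substitute into $\T_{(\Aw,\Bw)}=J_{\Aw}R_{\Bw}+\Id-J_{\Bw}$. Using $J_{\Aw}R_{\Bw}x = w+J_A\big(R_{\Bw}x-w\big)=w+J_AR_B(x+w)$ together with $J_{\Bw}x=J_B(x+w)$, the additive copy of $w$ regroups with $x$ so that $\T_{(\Aw,\Bw)}x = w+J_AR_B(x+w)+x-J_B(x+w)=J_AR_B(x+w)+(x+w)-J_B(x+w)=T(x+w)=\Tw x$, which is exactly \eqref{e:0602c}.

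For \eqref{e:0602d}, the first equality is the general inner/outer fixed-point identity: since $\Tw x=T(x+w)$ and $\wT u=Tu+w$, we have $x\in\fix\Tw \iff T(x+w)=x \iff x+w\in\fix\wT$, hence $\fix\Tw=-w+\fix\wT=-w+\fix\outs[-w]{T}$. The second equality then follows by inserting Corollary~\ref{cfix1}, which gives $\fix\wT=w+\bigcup_{z\in X}\big(z+(Bz-w)\cap(-A(z-w))\big)$; adding $-w$ cancels the leading shift and produces the stated union. I expect no genuine obstacle here: the whole argument is mechanical, and the only point demanding attention is the bookkeeping of inner versus outer perturbations and the signs of the shifts, all of which is settled once the two resolvent identities above are established.
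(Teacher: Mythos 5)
Your proof is correct and takes essentially the same approach as the paper: you derive the resolvent identities $J_{\Aw}x = w+J_A(x-w)$ and $J_{\Bw}x = J_B(x+w)$ (which the paper simply cites from \cite[Proposition~23.15]{BC2011}), substitute them into the Douglas--Rachford formula to obtain \eqref{e:0602c}, and then get \eqref{e:0602d} from the shift relation $\fix\Tw = -w+\fix\wT$ combined with Corollary~\ref{cfix1}. The only difference is that you verify the resolvent formulas from first principles instead of citing them, which is harmless.
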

\begin{proof}
Let $x\in X$. 
Using, e.g., \cite[Proposition 23.15]{BC2011}, we obtain
$\J_{\Aw}x=\J_A(x- w)+w$ and $\J_{\Bw}x=\J_B(x+w)$. 
Consequently, 
$\R_{\Aw}x=2\J_A(x-w)+2w-x$ and $\R_{\Bw}x=2\J_B(x+w)-x$. 
It thus follows with Definition~\ref{d:DR} that 
\begin{subequations}
\begin{align}
\T_{(\Aw,~\Bw)} x & = x -\J_{\Bw }x+\J_{\Aw} \R_{\Bw} x\\
&= x - \J_B(x+w)+\J_A\big(2\J_B(x+w)-x-w\big)+w\\
&=(x+w)-\J_B(x+w)+\J_A\big(\R_B(x+w)\big)\\
&=\T(x+w)=\Tw x,
\end{align} 
\end{subequations}
and so \eqref{e:0602c} holds. 
Next, $x\in\Fix\Tw$ 
$\iff$ $x=T(x+w)$ $\iff$ $x+w=w+T(x+w)$
$\iff$ $x+w\in\Fix\outs[-w]{T}$, and have thus verified the left identity in
\eqref{e:0602d}. 
To see the right identity in \eqref{e:0602d}, use Corollary~\ref{cfix1}. 
\end{proof}

We now obtain a generalization of Fact~\ref{f:Psi}, which corresponds to
the case when $w=0$. 

\begin{proposition}
Let $w\in X$ and define
\begin{equation}
\bK_w\colon X\To X\colon
x\mapsto (-A(x-w))\cap(Bx-w).
\end{equation}
Then 
\begin{equation}
\Psi_w\colon\gr\bK_w\to\Fix \wT \colon (z,k)\mapsto z+k+w
\end{equation}
is a well defined bijection that is continuous in both directions,
with $\Psi_w^{-1}\colon x\mapsto (J_Bx,x-J_Bx-w)$. 
\end{proposition}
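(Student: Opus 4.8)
The plan is to reduce the statement to the two-way correspondence already recorded in Proposition~\ref{fix:1}, of which this proposition is simply the ``packaged'' form. The key preliminary observation is notational: unwinding the perturbations gives $\inns[w]{A}z = A(z-w)$ and $\outs[w]{B}z = Bz-w$, so that $\bK_w z = (-A(z-w))\cap(Bz-w) = \outs[w]{B}z\cap(-\inns[w]{A}z)$. Thus the condition $(z,k)\in\gr\bK_w$ is \emph{exactly} the hypothesis $k\in\outs[w]{B}z\cap(-\inns[w]{A}z)$ of Proposition~\ref{fix:1}\ref{fix:1ii}, and conversely the set appearing on the right-hand side of Proposition~\ref{fix:1}\ref{fix:1i} is $\bK_w(J_Bx)$.

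First I would check that $\Psi_w$ takes values in $\Fix\wT$ and that $\Psi_w^{-1}$ takes values in $\gr\bK_w$. For the former, given $(z,k)\in\gr\bK_w$, apply Proposition~\ref{fix:1}\ref{fix:1ii} with $y=k$: the point $x := w+k+z = \Psi_w(z,k)$ lies in $\Fix\wT$, and moreover $z=J_Bx$ — this recovery identity is the one doing all the work. For the latter, given $x\in\Fix\wT$, Proposition~\ref{fix:1}\ref{fix:1i} yields $x-w-J_Bx \in \outs[w]{B}J_Bx \cap (-\inns[w]{A}J_Bx) = \bK_w(J_Bx)$, which says precisely that $\Psi_w^{-1}(x) = (J_Bx,\, x-J_Bx-w)\in\gr\bK_w$.

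Next I would verify that the two maps are mutually inverse. The identity $\Psi_w(\Psi_w^{-1}(x)) = J_Bx + (x-J_Bx-w) + w = x$ is pure algebra. For the reverse composition, set $x := \Psi_w(z,k) = z+k+w$; the recovery identity $z=J_Bx$ from Proposition~\ref{fix:1}\ref{fix:1ii} then gives $\Psi_w^{-1}(x) = (J_Bx,\, x-J_Bx-w) = (z,\,(z+k+w)-z-w) = (z,k)$. Hence $\Psi_w$ is a bijection with the stated inverse. Continuity in both directions follows at once: $\Psi_w$ is affine in $(z,k)$, while $J_B$ is firmly nonexpansive and hence continuous, so both coordinate maps $x\mapsto J_Bx$ and $x\mapsto x-J_Bx-w$ of $\Psi_w^{-1}$ are continuous.

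There is no genuine conceptual obstacle here, since the substance is already contained in Proposition~\ref{fix:1}; the only thing demanding care is the bookkeeping, in particular respecting the asymmetric perturbation convention $\Aw=\inns[w]{A}$ versus $\Bw=\outs[w]{B}$. Alternatively, one can derive the result by applying Fact~\ref{f:Psi} verbatim to the pair $(\Aw,\Bw)$, whose dual solution mapping is $\bK_w$ and whose Douglas--Rachford operator is $T_{(\Aw,\Bw)}=\Tw$ by \eqref{e:0602c}; one must then translate by $w$, using $\Fix\wT = w+\Fix T_{(\Aw,\Bw)}$ from \eqref{e:0602d} together with $J_{\Bw}x = J_B(x+w)$, and this translation is exactly what produces the extra ``$+w$'' in $\Psi_w$ and ``$-w$'' in $\Psi_w^{-1}$ relative to Fact~\ref{f:Psi}.
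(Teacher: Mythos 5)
Your proof is correct, but its primary route is genuinely different from the paper's. You build the bijection directly from Proposition~\ref{fix:1}: item \ref{fix:1ii} shows that $\Psi_w$ lands in $\Fix\wT$ and supplies the recovery identity $z=J_Bx$, item \ref{fix:1i} shows that the proposed inverse lands in $\gr\bK_w$, and you then verify the two compositions by hand and obtain continuity from the continuity of $J_B$. The paper instead runs a pure transfer argument: by Proposition~\ref{p:general} (specifically \eqref{e:0602c}), the Douglas--Rachford operator of the perturbed pair $(\Aw,\Bw)$ is $\Tw$ and its dual solution mapping is $\bK_w$, so Fact~\ref{f:Psi} applied verbatim to $(\Aw,\Bw)$ gives a bicontinuous bijection $\Phi\colon\gr\bK_w\to\Fix\Tw\colon(z,k)\mapsto z+k$; composing with the shift $S_{-w}$, which is a bijection from $\Fix\Tw$ onto $\Fix\wT$ by \eqref{e:0602d}, yields $\Psi_w=S_{-w}\circ\Phi$, and bijectivity and continuity in both directions are inherited through the composition rather than re-verified. (This is exactly the ``alternative'' you sketch in your closing paragraph, so you effectively have both arguments in hand.) Your route is more elementary and self-contained --- in particular it makes explicit that injectivity hinges on the recovery identity $z=J_Bx$ from Proposition~\ref{fix:1}\ref{fix:1ii} --- while the paper's route is shorter and exhibits the proposition as a formal consequence of the unperturbed case (Fact~\ref{f:Psi}) plus the perturbation calculus, with no algebra left to redo.
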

\begin{proof}
For the pair $(\Aw,\Bw)$, 
the dual solution mapping is $\bK_w$ and 
the Douglas--Rachford operator is $\Tw$ by \eqref{e:0602c}.
Applying Fact~\ref{f:Psi} in this context, we obtain
\begin{equation}
\Phi\colon \gr\bK_w\to \Fix\Tw \colon(z,k)\mapsto z+k
\end{equation}
is continuous in both directions with $\Phi^{-1}
\colon x\mapsto (J_{\Bw}x,x-J_{\Bw}x) = (J_B(x+w),x-J_B(x+w))$.
Furthermore, $S_{-w}$ is a bijection from $\Fix\Tw$ to $\Fix \wT$
by \eqref{e:0602d}. 
This shows that $\Psi_w = S_{-w}\circ \Phi$ and the result follows. 
\end{proof}

\section{The normal problem}

\label{s:main}

\subsection{The $w$-perturbed problem}

\label{s:pd}

\begin{definition}[$w$-perturbed problem]
Let $w\in X$.
The \emph{$w$-perturbation} of $(A,B)$ is 
$(\Aw,\Bw)$. 
The \emph{$w$-perturbed problem} 
associated with the pair $(A,B)$ is to
determine the set of zeros
\begin{equation}
Z_w := Z_{(\Aw,\Bw)} = \big(\Aw+\Bw\big)^{-1}(0).
\end{equation}
\end{definition}
Note that the $w$-perturbed problem of $(A,B)$ is precisely
the primal problem of $(\Aw,\Bw)$, i.e., of the $w$-perturbation
of $(A,B)$.

\begin{proposition}[Douglas--Rachford operator of the
$w$-perturbation]
\label{p:0602e}
Let $w\in X$. 
Then the Douglas--Rachford operator of the $w$-perturbation 
$(\Aw,\Bw)$ of $(A,B)$ is
\begin{equation}
T_{(\Aw,\Bw)} = \Tw.
\end{equation}
\end{proposition}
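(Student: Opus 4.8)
The plan is to recognize that the asserted identity $T_{(\Aw,\Bw)}=\Tw$ is precisely equation \eqref{e:0602c} of Proposition~\ref{p:general}, which has already been established for an arbitrary $w\in X$. Thus the quickest and most honest route is simply to invoke that result: the conclusion \eqref{e:0602c} is verbatim the present claim, so nothing further is required. I would phrase the proof as a one-line appeal to Proposition~\ref{p:general}.

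For completeness I would also record the short self-contained computation underlying \eqref{e:0602c}. First I would compute the resolvents of the two perturbed operators via the standard perturbation formula for resolvents (e.g.\ \cite[Proposition~23.15]{BC2011}): $\J_{\Aw}x=\J_A(x-w)+w$ and $\J_{\Bw}x=\J_B(x+w)$. From these I would read off the reflected resolvents $\R_{\Aw}x=2\J_A(x-w)+2w-x$ and $\R_{\Bw}x=2\J_B(x+w)-x$.

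Next I would substitute these into the defining formula $T_{(\Aw,\Bw)}=\Id-\J_{\Bw}+\J_{\Aw}\R_{\Bw}$ from Definition~\ref{d:DR} and simplify. The key algebraic observation is that the inner shift by $w$ and the outer shift by $w$ combine so that every occurrence of the argument is packaged as $x+w$; concretely, the expression collapses to $(x+w)-\J_B(x+w)+\J_A\big(\R_B(x+w)\big)=T(x+w)$. Since $\Tw=\inns[-w]{T}$ is by definition the operator $x\mapsto T(x+w)$, this gives $T_{(\Aw,\Bw)}x=\Tw x$, as required.

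I do not expect a genuine obstacle here: the argument is purely a bookkeeping exercise in tracking the opposite-sign shifts produced by the inner perturbation $\Aw=A\circ S_w$ versus the outer perturbation $\Bw=S_w\circ B$. The only point requiring a little care is the correct form of $\J_{\Aw}$, where the inner shift contributes the extra $+w$ term, as opposed to $\J_{\Bw}$, where the outer shift leaves the argument shifted by $+w$ but adds nothing outside; getting these two resolvent formulas right is exactly what makes the subsequent cancellation into $T(x+w)$ go through cleanly.
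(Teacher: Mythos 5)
Your proposal is correct and matches the paper exactly: the paper's own proof is the one-line appeal to equation \eqref{e:0602c} of Proposition~\ref{p:general}, and the supplementary computation you record (resolvent shift formulas from \cite[Proposition~23.15]{BC2011}, then collapsing the expression to $T(x+w)$) is precisely the paper's proof of that proposition. Nothing is missing.
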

\begin{proof}
This follows from \eqref{e:0602c} of Proposition~\ref{p:general}. 
\end{proof}

\begin{proposition}
\label{p:0602f}
Let $w\in X$.
Then 
\begin{equation}
\label{e:0604z}
Z_w = J_\Bw\big(\Fix\Tw\big) = J_B\big(w+\menge{x\in X}{x=T(x+w)}\big).
\end{equation}
Furthermore, the following are equivalent:
\begin{enumerate}
\item
\label{p:0602fi}
$Z_w\neq\varnothing$.
\item
\label{p:0602fii}
$\Fix\Tw \neq\varnothing$.
\item
\label{p:0602fiii}
$w\in\ran(\Id-T)$.
\item
\label{p:0602fiv}
$w\in\ran(\Aw+B)$.
\end{enumerate}
\end{proposition}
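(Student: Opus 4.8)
The plan is to transfer the already-established $w=0$ machinery wholesale to the perturbed pair $(\Aw,\Bw)$. The two structural facts that make this possible are Proposition~\ref{p:0602e}, which identifies the Douglas--Rachford operator of $(\Aw,\Bw)$ as $\Tw$, and the resolvent shift $J_{\Bw}x=J_B(x+w)$ computed in the proof of Proposition~\ref{p:general}. Since $\Aw$ and $\Bw$ are themselves maximally monotone, every result proved for a generic pair applies verbatim to them.

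First I would derive the two displayed identities for $Z_w$. Corollary~\ref{c:PLC} reads $J_B(\Fix T)=Z$ for an arbitrary pair; instantiating it at $(\Aw,\Bw)$ --- whose $B$-resolvent is $J_{\Bw}$, whose Douglas--Rachford operator is $\Tw$, and whose primal solution set is $Z_w$ --- gives the first equality $Z_w=J_{\Bw}(\Fix\Tw)$ at once. The second equality is then pure substitution: replace $J_{\Bw}$ by $J_B(\cdot+w)$ and unwind $\Fix\Tw=\{x : x=T(x+w)\}$, so that $J_{\Bw}(\Fix\Tw)=\{J_B(x+w) : x=T(x+w)\}=J_B\big(w+\{x : x=T(x+w)\}\big)$.

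For the equivalences I would argue three biconditionals, pivoting on (i) and (ii). The equivalence (i)$\iff$(ii) is immediate from the first displayed identity, since the image of a subset of $X$ under the single-valued resolvent $J_{\Bw}$ is empty precisely when that subset is. For (ii)$\iff$(iii) I would use the left identity in \eqref{e:0602d}, namely $\Fix\Tw=-w+\Fix\wT$; a translate is nonempty iff the original set is, and $\Fix\wT=\{y : (\Id-T)y=w\}=(\Id-T)^{-1}(w)$, whose nonemptiness is by definition $w\in\ran(\Id-T)$. Finally, (i)$\iff$(iv) comes straight from the definition $Z_w=(\Aw+\Bw)^{-1}(0)$: because $\Bw=B-w$ as an operator, one has $(\Aw+\Bw)x=(\Aw+B)x-w$, so $0\in(\Aw+\Bw)x\iff w\in(\Aw+B)x$, and hence $Z_w\neq\varnothing\iff w\in\ran(\Aw+B)$.

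The main obstacle is not depth but notational discipline: the whole argument rests on correctly juggling the inner/outer perturbation conventions together with the two shifted operators $\Tw\colon x\mapsto T(x+w)$ and $\wT\colon x\mapsto Tx+w$, and the matching resolvent shift $J_{\Bw}=J_B(\cdot+w)$. Each individual step is a routine substitution once these are pinned down, so the care required lies entirely in verifying that the $w=0$ facts are being instantiated at the correct perturbed pair and that the shifts compose consistently throughout.
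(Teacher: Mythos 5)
Your proof is correct and follows essentially the same route as the paper: both obtain \eqref{e:0604z} by instantiating Corollary~\ref{c:PLC} at the pair $(\Aw,\Bw)$ via Proposition~\ref{p:0602e}, read off (i)~$\iff$~(ii) from that identity, and get (i)~$\iff$~(iv) by the same one-line rewriting $0\in\Aw x+\Bw x \iff w\in \Aw x+Bx$. The only cosmetic difference is that you prove (ii)~$\iff$~(iii) by citing the identity $\Fix\Tw=-w+\Fix\wT$ from \eqref{e:0602d}, whereas the paper redoes that same computation inline by rewriting $x=T(x+w)$ as $w\in(\Id-T)(x+w)$.
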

\begin{proof}
The identity \eqref{e:0604z} follows by
combining Corollary~\ref{c:PLC} with Proposition~\ref{p:0602e}.
This also yields the equivalence of 
\ref{p:0602fi} and \ref{p:0602fii}. 
Let $x\in X$.
Then
$x\in Z_w$
$\iff$
$0\in \Aw x+\Bw x$
$\iff$
$w\in \Aw x + Bx$,
and we deduce the equivalence of \ref{p:0602fi} and
\ref{p:0602fiv}. 
Finally,
$x\in\Fix\Tw$
$\iff$
$x=T(x+w)$
$\iff$
$w\in (\Id-T)(x+w)$,
which yields the equivalence of
\ref{p:0602fii} and \ref{p:0602fiii}.
\end{proof}

The equivalence of \ref{p:0602fi} and \ref{p:0602fiii}
yields the following key result on which
$w$-perturbations have nonempty solution sets.

\begin{corollary}
\label{c:tick}
$\menge{w\in X}{Z_w\neq\varnothing} = \ran(\Id-T)$.
\end{corollary}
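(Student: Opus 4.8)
The plan is to read this equality off directly from Proposition~\ref{p:0602f}, which has already carried out all of the substantive work. The statement is a \emph{set equality}, so in principle I would argue by the standard elementwise double containment; in fact both containments collapse into a single biconditional quantified over $w$. First I would fix an arbitrary $w\in X$ and unwind the left-hand side: by the very definition of the set $\menge{w\in X}{Z_w\neq\varnothing}$, the element $w$ belongs to it precisely when $Z_w\neq\varnothing$, i.e.\ exactly when condition \ref{p:0602fi} of Proposition~\ref{p:0602f} holds.

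Next I would invoke the equivalence of \ref{p:0602fi} and \ref{p:0602fiii} in that proposition, which asserts that $Z_w\neq\varnothing$ if and only if $w\in\ran(\Id-T)$. Chaining these two steps gives, for every $w\in X$,
\begin{equation*}
w\in\menge{w\in X}{Z_w\neq\varnothing}
\iff
Z_w\neq\varnothing
\iff
w\in\ran(\Id-T),
\end{equation*}
and since $w$ was arbitrary the two sets coincide.

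The only point requiring care is \emph{not} to re-derive the equivalence itself: the genuine content here---relating nonemptiness of the $w$-perturbed solution set $Z_w$ to membership in the range of the displacement operator $\Id-T$, via the Douglas--Rachford fixed-point set (Corollary~\ref{c:PLC} together with Proposition~\ref{p:0602e})---is already packaged inside Proposition~\ref{p:0602f}. Consequently there is no real obstacle to overcome at this stage; the corollary is a one-line consequence obtained by quantifying the pointwise equivalence \ref{p:0602fi}~$\iff$~\ref{p:0602fiii} over all $w\in X$. If any difficulty lurks, it lies entirely upstream in establishing that equivalence, not in the present deduction.
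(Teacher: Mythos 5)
Your proposal is correct and matches the paper exactly: the paper derives Corollary~\ref{c:tick} immediately from the equivalence of items \ref{p:0602fi} and \ref{p:0602fiii} in Proposition~\ref{p:0602f}, which is precisely the pointwise biconditional you quantify over $w\in X$. Nothing further is needed.
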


\begin{remark}[Attouch--Th\'era dual of the perturbed problem]
\label{r:ATcompnp}
Consider the given pair of monotone operators $(A,B)$.
We could either first perturb and then take the Attouch--Th\'era dual or
start with the Attouch--Th\'era dual and then perturb.
It turns out that the order of these operations does not matter --- up to a
horizontal shift of the graphs. Indeed, 
for every $x\in X$, we have
\begin{subequations}
\begin{align}
\big(\inns[w]{A^{-\ovee}}+ \outs[w]{B^{-1}}\big)x
&=A^{-\ovee}(x-w)+B^{-1}x-w\\
&= A^{-\ovee}(x-w)-w+B^{-1}((x-w)+w)\\
&=\outs[w]{A^{-\ovee}}(x-w)+\inns[-w]{B^{-1}}(x-w)\\
&=\big(\outs[w]{A^{-\ovee}}+ \inns[-w]{B^{-1}}\big)(x-w).
\end{align}
\end{subequations}
Hence $\gr(\inns[w]{A^{-\ovee}}+ \outs[w]{B^{-1}}) = 
(w,0)+\gr (\outs[w]{A^{-\ovee}}+ \inns[-w]{B^{-1}})$, 
which gives rise to the following diagram:
\begin{center}
\begin{tikzpicture}[scale=4]
\node (P0) at (0,-0.3) {$(A,B)$};
\node (P1)
at (-1,-1) {$(A^{-\ovee},B^{-1})$} ;
\node (P2) at (-1,-2)
{$(\inns[w]{A^{-\ovee}},\outs[w]{B^{-1}})$};
\node (P3) at (1,-2)
{$(\outs[w]{A^{-\ovee}},\inns[-w]{B^{-1}})$};
\node (P4) at (1,-1)
{$(\inns[w]{A},\outs[w]{B})$};
\draw
(P0) edge[->,>=angle 90] node[left]
{Attouch-Th\'{e}ra dual~~~} (P1)
(P1) edge[->,>=angle 90] node[left] { ~~perturb by $w$} (P2)
([yshift= 1pt]P2.east) edge[->,>=angle 90] node[above] 
{horizontal shift by $-w$} ([yshift= 1pt]P3.west)
([yshift= -1pt]P2.east) edge[<-,>=angle 90] node[below] 
{horizontal shift by $w$} ([yshift= -1pt]P3.west)
(P4) edge[->,>=angle
90] node[right] {Attouch-Th\'{e}ra dual~} (P3)
(P0) edge[->,>=angle 90]
node[right] { ~~~perturb by $w$} (P4);
\end{tikzpicture}
\end{center}
\end{remark}

\subsection{The normal problem}

\label{s:normprob}

We are now in a position to define the normal problem.

\begin{definition}[infimal displacement vector and the normal problem]
\label{d:normal}
The vector 
\begin{equation}
v(A,B) = P_{\cran(\Id-T)}0
\end{equation}
is the \emph{infimal displacement vector of $(A,B)$}. 
The \emph{normal problem} associated with $(A,B)$
is the $v(A,B)$-perturbed problem of $(A,B)$, and
the set of \emph{normal solutions} is $Z_{v(A,B)}$. 
\end{definition}

\begin{remark}[new notions are well defined]
The notions presented in Definition~\ref{d:normal}
are \emph{well defined}: indeed,
since $T$ is firmly nonexpansive (Fact~\ref{f:DR}\ref{f:DRi}),
it is also nonexpansive
and the existence and uniqueness 
of $v(A,B)$ follows from Fact~\ref{f:idv}.
\end{remark}

\begin{remark}[new notions extend original notions]
Suppose that for the original problem $(A,B)$, we have
$Z = Z_0 = (A+B)^{-1}(0)\neq\varnothing$. 
By Corollary~\ref{c:tick}, $0\in\ran(\Id-T)$ and so
$v(A,B)=0$. Hence the normal problem coincides with the original
problem, as do the associated sets of solutions. 
\end{remark}

\begin{remark}[normal problem may or may not have solutions]
If the set of original solutions $Z$ is empty,
then the set of normal solutions may be 
either nonempty (see Example~\ref{ex:noyes}) or empty
(see Example~\ref{ex:nono}). 
\end{remark}

The original problem of finding a zero of $A+B$ is clearly
symmetric in $A$ and $B$. We now present a statement about the
\emph{magnitude} of the corresponding infimal displacement
vectors:

\begin{proposition}
$\|v(A,B)\|=\|v(B,A)\|$.
\end{proposition}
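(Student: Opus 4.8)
The plan is to reduce this to Lemma~\ref{l:idvs} by exhibiting the two Douglas--Rachford operators $T_{(A,B)}$ and $T_{(B,A)}$ as compositions in reverse order, so that their infimal displacement vectors have equal norm. Recall from Fact~\ref{f:DR}\ref{f:DRi} that $T_{(A,B)}=\thalb(\Id+R_AR_B)$ and $T_{(B,A)}=\thalb(\Id+R_BR_A)$. The infimal displacement vector of $(A,B)$ is $v(A,B)=P_{\cran(\Id-T_{(A,B)})}0$, so the whole statement is about the quantity $\|P_{\cran(\Id-T_{(A,B)})}0\|$, which by Fact~\ref{f:idv} equals $\inf_{x\in X}\|x-T_{(A,B)}x\|$.

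The key observation is that $\Id-T_{(A,B)}=\thalb(\Id-R_AR_B)$, so
\begin{equation}
\label{e:vab-disp}
\|v(A,B)\| = \inf_{x\in X}\|x-T_{(A,B)}x\| = \thalb\inf_{x\in X}\|x-R_AR_Bx\|,
\end{equation}
and similarly $\|v(B,A)\|=\thalb\inf_{x\in X}\|x-R_BR_Ax\|$. Thus it suffices to show that the two displacement magnitudes $\inf_x\|x-R_AR_Bx\|$ and $\inf_x\|x-R_BR_Ax\|$ coincide. This is \emph{exactly} the content of Lemma~\ref{l:idvs}: take $T_1:=R_A$ and $T_2:=R_B$, both of which are nonexpansive since reflected resolvents are nonexpansive. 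Then $v_1=P_{\cran(\Id-R_AR_B)}0$ and $v_2=P_{\cran(\Id-R_BR_A)}0$, and the lemma gives $\|v_1\|=\|v_2\|$. Combining with the displacement-vector characterization in Fact~\ref{f:idv} (which identifies $\|P_{\cran(\Id-T_i)}0\|$ with $\inf_x\|x-T_ix\|$), we get $\inf_x\|x-R_AR_Bx\|=\inf_x\|x-R_BR_Ax\|$, and the factor of $\thalb$ from \eqref{e:vab-disp} preserves equality, yielding $\|v(A,B)\|=\|v(B,A)\|$.

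I expect the proof to be short and essentially a packaging of existing results, so there is no serious obstacle. The only point requiring a moment's care is the bookkeeping in \eqref{e:vab-disp}: one must correctly account for the factor $\thalb$ relating $\Id-T$ to $\Id-R_AR_B$, and confirm that the reflected resolvents $R_A,R_B$ are the correct choices of $T_1,T_2$ so that the compositions $R_AR_B$ and $R_BR_A$ match the two Douglas--Rachford operators in reverse order. Once that identification is made, Lemma~\ref{l:idvs} does all the work, and the scalar $\thalb$ is common to both sides. An alternative, slightly more self-contained route would be to apply Lemma~\ref{l:idvs}'s own argument directly to $T_{(A,B)}$ and $T_{(B,A)}$ via the conjugacy $R_B T_{(A,B)} R_B^{-1}$-type relation, but invoking the lemma through the reflected-resolvent factorization is cleaner and avoids reproving the symmetric sequence estimate.
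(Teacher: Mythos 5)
Your proof is correct and follows essentially the same route as the paper: decompose $\Id-T_{(A,B)}=\thalb(\Id-R_AR_B)$ and $\Id-T_{(B,A)}=\thalb(\Id-R_BR_A)$ via Fact~\ref{f:DR}\ref{f:DRi}, then apply Lemma~\ref{l:idvs} with $T_1=R_A$ and $T_2=R_B$. Your explicit bookkeeping of the scalar $\thalb$ through the infimal-displacement characterization is in fact slightly more careful than the paper's own write-up, which contains a harmless typo (a factor $2$ where $\thalb$ is intended).
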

\begin{proof}
It follows from Fact~\ref{f:DR}\ref{f:DRi} that
\begin{equation}
\Id-T_{(A,B)} = \thalb(\Id-R_AR_B)
\;\;\text{and}\;\;
\Id-T_{(B,A)} = \thalb(\Id-R_BR_A).
\end{equation}
Thus, using Lemma~\ref{l:idvs}, we see that 
$\|v(A,B)\|=2 \|P_{\cran(\Id-R_AR_B)}0\| = 
2\|P_{\cran(\Id-R_AR_B)}0\|=\|v(B,A)\|$.
\end{proof}

\begin{remark}[$v(A,B)\neq v(B,A)$ may occur]
We will see in the sequel examples where
$v(A,B)\neq 0$ but
(i) $v(B,A)=-v(A,B)$ (see Remark~\ref{r:-v});
(ii) $v(B,A)\perp v(A,B)$ (see Example~\ref{ex:john});
or (iii) $v(A,B)=v(B,A)$ (see Example~\ref{ex:v=v}). 
\end{remark}

\begin{remark}[self-duality: $v(A,B)=v(A^{-\ovee},B^{-1})$]
Since, by Fact~\ref{f:DR}\ref{f:DRii},
$T_{(A,B)} = T_{(A^{\ovee},B^{-1})}$, 
it is clear that $v(A,B)=v(A^{-\ovee},B^{-1})$. 
It follows from Remark~\ref{r:ATcompnp} that the operations
of perturbing by $v(A,B)$ and taking the Attouch--Th\'era dual commute,
up to a shift.
\end{remark}

\subsection{Examples}

\label{s:ex}

\begin{proposition}
\label{p:0603b}
$(\Id-J_A)B^{-1}0 \subseteq \ran(\Id-T)\subseteq \dom B-\dom A$.
\end{proposition}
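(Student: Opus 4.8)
The plan is to treat the two inclusions separately. The right-hand inclusion $\ran(\Id-T)\subseteq\dom B-\dom A$ is essentially free: the corollary immediately following Fact~\ref{f:DR} already gives the stronger statement $\ran(\Id-T)\subseteq(\dom B-\dom A)\cap(\ran A+\ran B)$, and intersecting with $\ran A+\ran B$ only shrinks the set, so the desired inclusion drops out at once. All of the content therefore lies in the left-hand inclusion.

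For $(\Id-J_A)B^{-1}0\subseteq\ran(\Id-T)$, I would take a generic element, which has the form $(\Id-J_A)b=b-J_Ab$ for some $b\in B^{-1}0$, and produce witnesses for the graph parametrization of $\ran(\Id-T)$ recorded in that same corollary. Two facts feed in. First, $b\in B^{-1}0$ means $(b,0)\in\gr B$. Second, since $J_A=(\Id+A)^{-1}$, setting $a:=J_Ab$ gives $b\in a+Aa$, i.e.\ $(a,b-a)\in\gr A$. I would then instantiate the free variables of the parametrization as $(a,a^*):=(a,b-a)$ and $(b,b^*):=(b,0)$; the coupling condition $b-a=b^*+a^*$ reduces to $b-a=0+(b-a)$, which holds trivially because the $B$-slack $b^*$ vanishes. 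The associated element of $\ran(\Id-T)$ is $b-a=b-J_Ab=(\Id-J_A)b$, exactly the point we started from, which completes this inclusion.

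The step carrying the (modest) weight is the matching of witnesses: the key observation is that a zero $b$ of $B$ supplies the $B$-datum with $b^*=0$, and this is precisely what forces the coupling equation to collapse to the identity $b-a=a^*$ that the resolvent relation $a^*=b-J_Ab\in A(J_Ab)$ satisfies automatically. Beyond this bookkeeping there is no genuine obstacle; both inclusions are elementary consequences of the already-established graph description of $\Id-T$.
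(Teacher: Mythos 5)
Your proof is correct, and for the part that carries the weight it takes a genuinely different route from the paper. The right inclusion is handled identically (it is immediate from \eqref{con:eq}). For the left inclusion, you verify membership in $\ran(\Id-T)$ directly from Eckstein's graph parametrization in \eqref{con:eq}, instantiating $(a,a^*)=(J_Ab,\,b-J_Ab)\in\gr A$ and $(b,b^*)=(b,0)\in\gr B$ and checking the coupling identity $b-a=b^*+a^*$. The paper instead sets $w:=z-J_Az$ for $z\in B^{-1}0$, observes $w\in A(z-w)+0\subseteq \Aw z+Bz$, i.e.\ $w\in\ran(\Aw+B)$, and then invokes the equivalence Proposition~\ref{p:0602f}\ref{p:0602fiii}$\,\Leftrightarrow\,$\ref{p:0602fiv} to conclude $w\in\ran(\Id-T)$. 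The underlying witnesses are the same in both arguments --- the resolvent relation $z-J_Az\in A(J_Az)$ together with $0\in Bz$ --- so the difference lies entirely in the packaging lemma used to convert these data into membership of $\ran(\Id-T)$. Your version needs only Fact~\ref{f:DR} and its corollary, so it is self-contained and independent of the perturbation machinery of Section~\ref{s:main}; in particular it avoids the dependence on Combettes's result (Corollary~\ref{c:PLC}), which is hidden inside the proof of Proposition~\ref{p:0602f}. The paper's version, by contrast, deliberately reuses the $w$-perturbation framework that the normal problem is built on, which makes the proposition read as an instance of the theory being developed rather than as a standalone computation. Both are valid; yours is the more elementary, the paper's the more thematic.
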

\begin{proof}
The right inclusion follows from \eqref{con:eq}. 
To tackle the left inclusion, 
suppose that $z\in B^{-1}0$ and set $w := z-J_Az$.
Then $w=z-J_Az\in A(J_Az)=A(z-w)+0\subseteq \Aw(z)+Bz$.
Hence, by Proposition~\ref{p:0602f}, 
$w\in\ran(\Id-T)$. 
\end{proof}

\begin{proposition}[normal cone operators]
\label{p:0603c}
Suppose that $A= N_U$ and $B=N_V$,
where $U$ and $V$ are nonempty closed convex subsets of $X$.
Then
\begin{equation}
\label{e:0603d}
v{(A,B)} = P_{\overline{V-U}}0 
\end{equation}
and the set of normal solutions is
\begin{equation}
V\cap(v{(A,B)}+U)=\Fix(P_VP_U).
\end{equation}
\end{proposition}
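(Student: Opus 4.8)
The plan is to treat the two claims separately, establishing the formula for the infimal displacement vector first and then identifying the normal solution set.

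For \eqref{e:0603d}, I would start from Definition~\ref{d:normal}, which tells us that $v(A,B) = P_{\cran(\Id-T)}0$, so it suffices to show that $\cran(\Id-T) = \overline{V-U}$. Since $A=N_U$ and $B=N_V$ are normal cone operators, we have $J_A = P_U$ and $J_B = P_V$ by the Example following Proposition~\ref{p:0603a}. The inclusion $\ran(\Id-T) \subseteq \dom B - \dom A = V - U$ is immediate from \eqref{con:eq} (noting $\dom N_U = U$ and $\dom N_V = V$). For the reverse direction, I would show $\overline{V-U} \subseteq \cran(\Id-T)$; here Proposition~\ref{p:0603b} is not quite enough since it only uses $B^{-1}0$, so instead I would argue directly that every element $v-u$ with $u\in U$, $v\in V$ lies in $\cran(\Id-T)$. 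The cleanest route is to use the well-known identity $\Id - T_{(A,B)} = \thalb(\Id - R_A R_B)$ from Fact~\ref{f:DR}\ref{f:DRi}, so that $\cran(\Id-T) = \thalb\,\cran(\Id-R_U R_V)$ where $R_U = 2P_U-\Id$ and $R_V = 2P_V - \Id$ are the reflectors. Taking the closure of convex sets and using that $P_{\cran(\cdot)}0$ computes the minimal-norm element, I would conclude $v(A,B) = P_{\overline{V-U}}0$.

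For the second claim, the normal problem is the $v$-perturbed problem with $v := v(A,B)$, and its solution set is $Z_v = (\Av + \Bv)^{-1}(0)$ where I abbreviate $\Av = \inns[v]{N_U}$ and $\Bv = N_V$. The plan is to unwind the definitions: $x \in Z_v$ means $0 \in N_U(x-v) + N_V(x)$, equivalently $v \in N_U(x-v) \cdot$-shifted plus $N_V(x)$; more carefully, $0 \in \inns[v]{N_U}(x) + N_V(x) = N_U(x-v) + N_V(x)$. Writing this out, $x-v$ must be a point of $U$ whose normal cone contains some $-w^*$ with $w^* \in N_V(x)$. I would show this is exactly the condition $x \in V$ and $x - v \in U$ with the normal cones pointing oppositely, which characterizes $x \in V \cap (v + U)$ combined with the optimality condition for the projection. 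The equality with $\Fix(P_V P_U)$ would then follow from Corollary~\ref{c:PLC} applied to the perturbed pair: by \eqref{e:0604z}, $Z_v = J_{\Bv}(\Fix \Tv) = P_V(\Fix T_v)$, and I would identify $\Fix T_v$ via Proposition~\ref{p:general} and the reflector formulation $T = \thalb(\Id + R_U R_V)$.

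The main obstacle I anticipate is the reverse inclusion $\overline{V-U} \subseteq \cran(\Id-T)$, i.e., showing that $\cran(\Id-T)$ is not strictly smaller than $\overline{V-U}$. The inclusion from \eqref{con:eq} only gives one direction, and producing, for each $v-u$, an explicit sequence realizing it as a displacement $x - Tx$ requires care because the constraint $b-a = b^*+a^*$ in the graph description of $\Id-T$ couples the normal-cone data. I expect the reflector identity $\Id-T = \thalb(\Id - R_U R_V)$ to be the decisive tool, since $\ran(\Id - R_U R_V)$ is comparatively transparent for reflectors and its closed convex hull can be matched against $2(\overline{V-U})$; verifying this matching, and in particular that the closures agree, is where the real work lies. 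Once $v(A,B) = P_{\overline{V-U}}0$ is secured, the identification of the normal solutions with both $V \cap (v+U)$ and $\Fix(P_V P_U)$ should follow routinely from \eqref{e:0604z} and the fixed-point characterization of the Douglas--Rachford operator for projectors.
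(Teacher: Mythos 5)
Your strategy for \eqref{e:0603d} hinges on proving the set equality $\cran(\Id-T)=\overline{V-U}$, and in particular the inclusion $\overline{V-U}\subseteq\cran(\Id-T)$, which you yourself flag as the crux. That inclusion is \emph{false} in general, so no amount of work with the reflector identity will rescue it. Concretely, take $X=\RR^2$ and $U=V=\RR\times\{0\}$. Then $J_A=J_B=P_U$ and $R_AR_B=(2P_U-\Id)^2=\Id$, so $T=J_AR_B+\Id-J_B=\Id$ and $\ran(\Id-T)=\{0\}$; equivalently, in the graph formula \eqref{con:eq} the constraint $b-a=b^*+a^*$ forces $b-a\in U\cap U^{\perp}=\{0\}$. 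Yet $\overline{V-U}=\RR\times\{0\}\neq\{0\}$. Thus $\cran(\Id-T)$ can be a proper subset of $\overline{V-U}$ (and your fallback, matching $\cconv\ran(\Id-R_AR_B)$ with $2(\overline{V-U})$, fails in the same example). What is true --- and all that \eqref{e:0603d} asserts --- is that the two sets have the \emph{same minimal-norm element}. The paper's proof is built exactly around this distinction: it sandwiches $\overline{C}\subseteq\cran(\Id-T)\subseteq\overline{V-U}$, where $C:=(\Id-P_U)(V)\subseteq\ran(\Id-T)$ comes from Proposition~\ref{p:0603b} (using $B^{-1}0=V$ and $J_A=P_U$), and then invokes the nontrivial best-approximation result \cite[Theorem~4.1]{BB94}, which supplies a sequence $(v_n)$ in $V$ with $v_n-P_Uv_n\to g:=P_{\overline{V-U}}0$. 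This places $g$ in $\overline{C}$; since the minimal-norm element of the largest set lies in the smallest set, the projections of $0$ onto all three sets coincide. Your proposal contains no substitute for this external ingredient, and without it the first claim does not follow.

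For the second claim your outline is closer to workable but still leans on unproved identifications. The forward direction (a normal solution $x$ satisfies $x\in V\cap(g+U)$) is indeed routine, since $g\in N_U(x-g)+N_V(x)$ forces both normal cones to be nonempty. The reverse direction, however, is not mere unwinding: for an arbitrary perturbation $w$, a point of $V\cap(w+U)$ need \emph{not} solve the $w$-perturbed problem; one must use that $w=g$ is the minimal-norm displacement, which (via \cite[Lemma~2.2]{BB94}) gives $P_Ux=x-g$ and $P_V(x-g)=x$, hence $\RP g\subseteq N_U(x-g)$ and $\RM g\subseteq N_V(x)$, and therefore $g\in\RR g\subseteq N_U(x-g)+N_V(x)$. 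Likewise, the identity $V\cap(g+U)=\Fix(P_VP_U)$ is itself the cited Lemma~2.2 of \cite{BB94}, not a consequence of Corollary~\ref{c:PLC}; your plan to recover it by computing the fixed-point set of the shifted Douglas--Rachford operator via Proposition~\ref{p:general} would essentially amount to reproving that lemma, and again would require knowing that the shift is by the minimal-norm vector $g$ rather than by a generic $w$.
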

\begin{proof}
Since $B^{-1}0 = V$ and $J_A = P_U$, Proposition~\ref{p:0603b} yields
$C := \menge{v-P_Uv}{v\in V} \subseteq \ran(\Id-T) \subseteq V- U$; hence,
\begin{equation}
\overline{C} \subseteq \cran(\Id-T) \subseteq
\overline{V-U}.
\end{equation}
Set $g := P_{\overline{V-U}}0$. 
By \cite[Theorem~4.1]{BB94}, there exists a sequence $(v_n)_\nnn$ in $V$ such
that $v_n-P_Uv_n\to g$. It follows that $(v_n-P_Uv_n)_\nnn$ lies in $C$ and
hence that $g\in \overline{C}$. Therefore
$P_{\overline{C}}0 = P_{\cran(\Id-T)}0 = P_{\overline{V-U}}0$ and we obtain
\eqref{e:0603d}.
(For an alternative proof, see \cite[Theorem~3.5]{BCL04}.)

Let $x\in X$. Then $x$ is a normal solution if and only if
\begin{equation}
\label{e:0603e}
g \in N_U(x-g)+N_V(x).
\end{equation}
Assume first that \eqref{e:0603e} holds.
Then $x\in V$ and $x-g\in U$.
Hence $x\in V\cap (g+U)=\Fix(P_VP_U)$ by 
\cite[Lemma~2.2]{BB94}. Conversely, assume $x\in V\cap(g+U)=\Fix(P_VP_U)$. 
Then $x\in V$, $x-g\in U$, $P_Ux=x-g$ and $P_V(x-g)=x$.
Hence $N_U(x-g)\supseteq \RP g$ and $N_V(x)\supseteq \RM g$;
consequently, $N_U(x-g)+N_V(x)\supseteq \RP g +\RM g = \RR g\ni g$ and
therefore \eqref{e:0603e} holds.
\end{proof}

\begin{remark}
\label{r:-v}
Proposition~\ref{p:0603c} is consistent with the theory dealing with
inconsistent feasibility problems (see, e.g., \cite{BB94}).
Note that it also yields the formula
\begin{equation}
v(A,B) = -v(B,A)
\end{equation}
in this particular context. 
\end{remark}

\begin{example}[no original solutions but normal solutions exist]
\label{ex:noyes}
Suppose that $A$ and $B$ are as in Proposition~\ref{p:0603c},
that $U\cap V=\varnothing$, and $V$ is also bounded.
Then $\Fix(P_VP_U)\neq\varnothing$ by the Browder--G\"ohde--Kirk fixed
point theorem (see, e.g., \cite[Theorem~4.19]{BC2011}). 
So, the original problem has no solution but there exist normal solutions.
\end{example}

\begin{example}[neither original nor normal solutions exist]
\label{ex:nono}
Suppose that $X=\RR^2$, that 
$A$ and $B$ are as in Proposition~\ref{p:0603c},
that $U=\RR\times\{0\}$, and that
$V=\menge{(x,y)\in\RR^2}{\beta+\exp(x)\leq y}$, where $\beta\in\RP$.
Then $v{(A,B)}=(\beta,0)$ yet $\Fix(P_VP_U)=\varnothing$.
\end{example}

\begin{example}
\label{ex:john}
Suppose that $X=\RR^2$, 
let $L\colon\RR^2\to\RR^2\colon(\xi,\eta)\mapsto(-\eta,\xi)$ be the rotator
by $\pi/2$, let $a^*\in\RR^2$ and $b^*\in\RR^2$.
Suppose that $(\forall x\in\RR^2)$
$Ax=Lx+a^*$ and $Bx=-Lx-b^*$.
Now let $x\in X$ and let $w\in X$.
Then $0=A(x-w)+Bx-w = L(x-w)+a^*-Lx-b^*-w$ 
and so $(\Id+L)w=a^*-b^*$, i.e., $w=J_L(a^*-b^*) =
(1/2)(\Id-L)(a^*-b^*)$ by Example~\ref{ex:sushirot}. 
It follows that 
\begin{equation}
v{(A,B)} = \thalb(\Id-L)(a^*-b^*).
\end{equation}
An analogous argument yields
\begin{equation}
v{(B,A)} = \thalb(\Id+L)(b^*-a^*).
\end{equation}
Setting $d^*=b^*-a^*$, we have
$4\scal{v(A,B)}{v(B,A)} = \scal{Ld^*-d^*}{Ld^*+d^*}=
\|Ld^*\|^2-\|d^*\|^2 = 0$.
and $v(A,B)+v(B,A)=Ld^*$.
Thus if $d^*\neq 0$, i.e., $a^*\neq b^*$, then
\begin{equation}
v(A,B)\neq 0
\;\;\text{and}\;\; 
v(A,B)\perp v(B,A). 
\end{equation}
\end{example}

\begin{example}
\label{ex:v=v}
Suppose that there exists $a^*$ and $b^*$ in $X$ such that
$\gr A = X\times \{a^*\}$ and $\gr B = X\times \{b^*\}$.
By \eqref{con:eq},
$\varnothing\neq\ran(\Id-T)\subseteq \{a^*+b^*\}$. 
Hence $v(A,B)=a^*+b^*$ and analogously
$v(B,A)=a^*+b^*$. Thus, if $a^*+b^*\neq 0$, we have
\begin{equation}
v(A,B)\neq 0
\;\;\text{and}\;\; 
v(A,B)= v(B,A). 
\end{equation}
\end{example}

\begin{proposition}
\label{p:peepee}
Suppose that there exists 
continuous linear monotone operators $L$ and $M$ on $X$,
and vectors $a^*$ and $b^*$ in $X$ such that
$(\forall x\in X)$
$Ax=Lx+a^*$ and $Bx=Mx+b^*$.
Consider the problem
\begin{equation}
\label{e:peepee}
\text{\rm minimize $\|w\|^2$ ~~subject to~~ 
$(w,x)\in X\times X$ and $(\Id+L)w-(L+M)x=a^*+b^*$.}
\end{equation}
Let $(w,x)\in X\times X$.
Then $(w,x)$ solves \eqref{e:peepee}
$\iff$ $w=v(A,B)$ and $x$ is a normal solution
$\iff$ 
$w=P_{J_L(\ran(A+B))}0$ and $x\in(A+B)^{-1}(\Id+L)w$. 
\end{proposition}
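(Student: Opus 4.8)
The plan is to reduce \eqref{e:peepee} to a minimal-norm problem over $\ran(\Id-T)$ and then invoke the infimal displacement vector. First I would rewrite the affine constraint in terms of the perturbed operators. Using $\Aw x = L(x-w)+a^*$ and $\Bw x = Mx+b^*-w$, a direct computation gives $\Aw x + \Bw x = (L+M)x - (\Id+L)w + (a^*+b^*)$, so the constraint $(\Id+L)w-(L+M)x=a^*+b^*$ is exactly $0\in\Aw x+\Bw x$, i.e. $x\in Z_w$. Rearranging the same identity shows this is equivalent to $(A+B)x=(\Id+L)w$, that is, $x\in(A+B)^{-1}((\Id+L)w)$. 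This identifies the feasible set of \eqref{e:peepee} with $\menge{(w,x)}{x\in Z_w}$ and supplies the reformulation of the constraint used in the second equivalence.

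Second, since the objective $\|w\|^2$ depends on $w$ alone, \eqref{e:peepee} is equivalent to minimizing $\|w\|^2$ over those $w$ admitting a feasible $x$, namely $\menge{w}{Z_w\neq\varnothing}$. By Corollary~\ref{c:tick} this set is $\ran(\Id-T)$; on the other hand, the rearranged constraint shows $Z_w\neq\varnothing$ $\iff$ $(\Id+L)w\in\ran(A+B)$ $\iff$ $w\in J_L(\ran(A+B))$ (here $\Id+L$ is strongly monotone, so $J_L=(\Id+L)^{-1}$ is a bounded linear operator and $J_L(\ran(A+B))$ is a well-defined convex set). Hence $\ran(\Id-T)=J_L(\ran(A+B))$, and \eqref{e:peepee} reduces to finding the minimal-norm element of this set together with any $x$ solving $(A+B)x=(\Id+L)w$.

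Third, I would identify this minimal-norm element with $v(A,B)$. By Fact~\ref{f:idv}, $v(A,B)=P_{\cran(\Id-T)}0$ is the unique minimal-norm element of $\cran(\Id-T)=\overline{J_L(\ran(A+B))}$, so $\|v(A,B)\|$ is the optimal value of \eqref{e:peepee}. This value is attained in $\ran(\Id-T)$ precisely when $v(A,B)\in\ran(\Id-T)=J_L(\ran(A+B))$, which by Corollary~\ref{c:tick} is equivalent to $Z_{v(A,B)}\neq\varnothing$, i.e. to the existence of normal solutions; in that case $v(A,B)$ is the minimal-norm element of the set itself, so $v(A,B)=P_{J_L(\ran(A+B))}0$. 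Assembling: $(w,x)$ solves \eqref{e:peepee} iff $w=v(A,B)$ and $x\in Z_{v(A,B)}$, i.e. $x$ is a normal solution; substituting $v(A,B)=P_{J_L(\ran(A+B))}0$ and rewriting $x\in Z_w$ as $x\in(A+B)^{-1}((\Id+L)w)$ via the first paragraph yields the third characterization.

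The main obstacle is attainment: because $L+M$ is merely a bounded monotone linear operator, $\ran(A+B)$, and hence $\ran(\Id-T)=J_L(\ran(A+B))$, need not be closed in infinite dimensions, so the minimal-norm element need not exist. The plan copes with this by observing that all the relevant conditions coincide — attainment of the minimum in \eqref{e:peepee}, existence of $P_{J_L(\ran(A+B))}0$, the membership $v(A,B)\in\ran(\Id-T)$, and nonemptiness of $Z_{v(A,B)}$ — are equivalent through Corollary~\ref{c:tick}. Consequently, whenever a normal solution fails to exist, both sides of each asserted equivalence are simultaneously unsatisfiable, so the chain holds (vacuously) in that case as well; when a normal solution does exist, the projection onto $J_L(\ran(A+B))$ agrees with the projection onto its closure, and the equivalences are the substantive ones derived above.
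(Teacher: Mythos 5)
Your proof is correct and follows essentially the same route as the paper's: you reduce the constraint in \eqref{e:peepee} to $x\in Z_w$, equivalently $w=J_L\big((A+B)x\big)$, and then combine Corollary~\ref{c:tick} with Fact~\ref{f:idv}, which is exactly the content of the paper's terse appeal to Proposition~\ref{p:0602f}. The only difference is that you spell out the attainment issue and the vacuous case when no normal solution exists, details the paper leaves implicit.
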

\begin{proof}
Then $w = \Aw x + Bx$ 
$\iff$
$(\Id+L)w-(L+M)x=a^*+b^*$
$\iff$
$(\Id+L)w=(L+M)x+a^*+b^*$
$\iff$
$w = J_L\big((L+M)x+a^*+b^*\big) = J_L(A+B)x$. 
The conclusion thus follows from Proposition~\ref{p:0602f}. 
\end{proof}

It is nice to recover a special case of our original motivation given in
Section~\ref{s:motLA}:

\begin{example}[classical least squares solutions]
Suppose that $X=\RR^n$, let $M\in \RR^{n\times n}$ be such that $M+M^*$ is
positive semidefinite, and let $b\in \RR^n$. Suppose that 
$(\forall x\in \RR^n)$ $Ax=-b$ and $B=M$ so that the original problem is to
find $x\in \RR^n$ such that $Mx=b$.
Then $v(A,B)= P_{\ran M}(b)-b$ and the normal solutions are precisely the
least squares solutions. 
\end{example}
\begin{proof}
We will use Proposition~\ref{p:peepee}. 
The constraint in \eqref{e:peepee} turns into
$(\Id+0)w-(0+M)x=0+(-b)$, i.e., $w=Mx-b$ so that the optimization problem
in \eqref{e:peepee} is
\begin{equation}
\text{minimize $\|Mx-b\|^2$.}
\end{equation}
Hence the normal solutions in our sense are precisely the classical least
squares solutions. 
Furthermore,
$v(A,B)=P_{\ran(A+B)}0 = P_{-b+\ran M}(0) = P_{\ran M}(b)-b$. 
\end{proof}

\subsection{Future research}

\label{s:fut}

We conclude by outlining some research directions:

\begin{itemize}
\item Note that 
the infimal displacement vector can be found as
\begin{equation}
\label{e:pazy}
(\forall x\in X)\quad
v(A,B) = -\lim_{n\to\infty} \frac{T^nx}{n} = \lim_{n\to\infty}
T^nx-T^{n+1}x;
\end{equation}
see \cite{BBR}, \cite{BrRe}, and \cite{Pazy}. 
Conceptionally, we can thus first find $v(A,B)$ via either
iteration in \eqref{e:pazy}, and proceed then
by iterating the operator $x\mapsto T(x+v(A,B))$ to find a normal
solution. It would be desirable to devise an algorithm that
approximates $v(A,B)$ and a corresponding normal solution (should
it exist) \emph{simultaneously}. 
Proposition~\ref{p:peepee}, which leads us to solving a quadratic
optimization problem, suggests that this may indeed be possible in general.

\item Another avenue for future research is to consider more general
sums of the form $A+L^*BL$, where $L$ is a linear operator.

\item Finally, it would be interesting to relate our
perturbation technique to classical perturbation techniques
already developed for convex optimization; see, e.g., \cite{Bot}. 
\end{itemize}

\small

\end{document}